\documentclass[11pt, bezier]{article}
\usepackage{amsmath}
\usepackage{amssymb}

\makeatletter


\@ifundefined{date}{}{\date{}}

\newcommand{\lyxmathsym}[1]{\ifmmode\begingroup\def\b@ld{bold}
  \text{\ifx\math@version\b@ld\bfseries\fi#1}\endgroup\else#1\fi}


\usepackage{amsfonts}\usepackage{xspace}
\textwidth = 15 cm \textheight = 24 cm \oddsidemargin =0.5 cm
\evensidemargin = 0 cm \topmargin = -1.2 cm
\parskip = 2 mm
\newtheorem{prethm}{{\bf Theorem}}

\newenvironment{thm}{\begin{prethm}{\hspace{-0.5
               em}{\bf.}}}{\end{prethm}}
\newtheorem{precor}{{\bf Corollary}}

\newenvironment{cor}{\begin{precor}{\hspace{-0.5
               em}{\bf.}}}{\end{precor}}

\newtheorem{prethmm}{{\bf Theorem}}

\newenvironment{thmm}{\begin{prethmm}{\hspace{-1.8
               mm}{\bf.}}}{\end{prethmm}}
               
\newtheorem{preconj}{{\bf Conjecture}}

\newtheorem{preprop}{{\bf Proposition}}

\newenvironment{prop}{\begin{preprop}{\hspace{-0.5
               em}{\bf.}}}{\end{preprop}}

\newtheorem{preques}{{\bf Question}}

\newenvironment{ques}{\begin{preques}{\hspace{-0.5
               em}{\bf.}}}{\end{preques}}

\newtheorem{preremark}{{\bf Remark}}

\newenvironment{remark}{\begin{preremark}\rm{\hspace{-0.5
               em}{\bf.}}}{\end{preremark}}

\newtheorem{prelem}{{\bf Lemma}}

\newenvironment{lem}{\begin{prelem}{\hspace{-0.5
               em}{\bf.}}}{\end{prelem}}

\newtheorem{preproof}{{\bf Proof.}}

\newenvironment{proof}[1]{\begin{preproof}{\rm
               #1}\hfill{$\Box$}}{\end{preproof}}

\usepackage{color}



\begin{document}

\title{On the existence of $d$-homogeneous $3$-way Steiner trades}
\maketitle
\vspace{-1.5cm}
\begin{center}
{\begin{large}
H. Amjadi and N. Soltankhah $^\ast$
\end{large} }\\
\vspace*{0.5cm}
{\it  Faculty of Mathematical Sciences
\\ Alzahra University
\\Vanak Square 19834, Tehran, Iran}\\
\footnote{ Email addresses: {\tt h.amjadi@alzahra.ac.ir} (H. Amjadi), {\tt soltan@alzahra.ac.ir} (N. Soltankhah)}
\footnote{\it $^\ast$ Corresponding author}
\end{center}
%
\begin{abstract}
A $\mu$-way $(v, k, t)$ trade $T = \{T_{1} , T_{2}, . . ., T_{\mu} \}$ of volume $m$ consists of $\mu$ disjoint collections $T_{1}, T_{2}, . . ., T_{\mu}$, each of $m$ blocks of size $k$, such that for every $t$-subset of $v$-set $V$ the number of blocks containing this $t$-subset is the same in each $T_{i}$ $($for $1 \leq i \leq \mu)$. A $\mu$-way $(v, k, t)$ trade is called $\mu$-way $(v, k, t)$ Steiner trade if any $t$-subset of found$(T)$ occurs at most once in $T_{1}$ $(T_{j},~ j \geq 2)$.
A $\mu$-way $(v,k,t)$ trade is called  $d$-homogeneous if each element of $V$ occurs in precisely $d$ blocks of $T_{1}$ $(T_{j},~ j \geq 2)$. In this paper we characterize the $3$-way $3$-homogeneous $(v,3,2)$ Steiner trades of volume $v$. Also we show how to construct a $3$-way $d$-homogeneous $(v,3,2)$ Steiner trade for $d\in \{4,5,6\}$, except for seven small values of $v$.
\end{abstract}
\begin{itemize}
\item[]{{\footnotesize {\bf Key words:}\ Steiner trade, $\mu$-way trade, Homogeneous trade.}}
\item[]{ {\footnotesize {\bf Subject classification: 05B05}.}}
\end{itemize}
\section{Introduction and preliminary results}
Let $V$ be a set of $v$ elements and $k$ and $t$ be two positive integers such that $t < k < v$. A {\sf $(v, k, t)$ trade} $T = \{T_{1} , T_{2} \}$ of {\sf volume $m$} consists of two disjoint collections $T_{1}$ and $T_{2}$, each of containing $m$, $k$-subsets of $V$, called blocks, such that every $t$-subset of $V$ is contained in the same number of blocks in $T_{1}$ and $T_{2}$. A $(v, k, t)$ trade is called {\sf $(v, k, t)$ Steiner trade} if any $t$-subset of $V$ occurs in at most once in $T_{1} (T_{2})$. In a $(v, k, t)$ trade, both collections of blocks must cover the same set of elements. This set of elements is called the {\sf foundation} of the trade and is denoted by {\sf found$(T)$}.

The concept of $\mu$-way $(v, k, t)$ trade, was defined recently in {\cite{Rashidi-3way}}.
\\ \textbf{Definition}: A {\sf $\mu$-way $(v, k, t)$ trade} $T = \{T_{1} , T_{2}, . . ., T_{\mu} \}$ of volume $m$ consists of $\mu$ disjoint collections $T_{1}, T_{2}, . . ., T_{\mu}$, each of $m$ blocks of size $k$, such that for every $t$-subset of $v$-set $V$ the number of blocks containing this $t$-subset is the same in each $T_{i}$ $($for $1 \leq i \leq \mu)$. In other words any pair of collections $\{T_{i}, T_{j}\}$, $1 \leq i < j \leq \mu$ is a $(v, k, t)$ trade of volume $m$. It is clear by the definition that a trade is a $2$-way trade. A $\mu$-way $(v, k, t)$ trade is called {\sf $\mu$-way $(v, k, t)$ Steiner trade} if any $t$-subset of found$(T)$ occurs at most once in $T_{1}$ $(T_{j},~ j \geq 2)$.
A $\mu$-way $(v,k,t)$ trade is called {\sf $d$-homogeneous} if each element of $V$ occurs in precisely $d$ blocks of $T_{1}$ $(T_{j},~ j \geq 2)$. Let $x\in$ found$(T)$, the number of blocks in $T_{i}$ $($for $1\leq i\leq \mu)$ which contains $x$ is denoted by $r_{x}$. The set of blocks in $T_{i}$ $($for $1\leq i\leq \mu)$ which contains $x$ is denoted by $T_{ix}$ $($for $1\leq i\leq \mu)$. It is easy to see that $T_{x}=\{T_{1x} , . . . , T_{\mu x}\}$ is a $\mu$-way $(v, k, t-1)$ trade of volume $r_{x}$. If we remove $x$ from the blocks of $T_{x}$ , then the result will be a $\mu$-way $(v-1, k-1, t-1)$ trade which is called the {\sf derived trade} of $T$. Also it is easy to show that if $T$ is a Steiner trade then its derived trade is also a Steiner trade. A trade $T' = \{T'_{1} , T'_{2}, . . ., T'_{\mu} \}$ is called a {\sf subtrade} of $T = \{T_{1} , T_{2}, . . ., T_{\mu} \}$, if $T'_{i}\subseteq T_{i}$ for $1\leq i\leq \mu$.

For $\mu =2$, Cavenagh et al. {\cite{minimal}}, constructed minimal $d$-homogeneous $(v,3,2)$ Steiner trades of foundation
$v$ and volume $dv/3$ for sufficiently large values of $v$, (specifically, $v > 3(1.75d^{2}+3)$ if $v$ is divisible by $3$ and 
$v > d(4^{d/3+1}+1)$ otherwise).

Generally we can ask the following question.
\begin{ques}\label{ques1}
For given $\mu$, $d$ and $v$, does there exist a $\mu$-way $d$-homogeneous $(v,3,2)$ Steiner trade?
\end{ques}

In this paper, we aim to construct $3$-way $d$-homogeneous $(v,3,2)$ Steiner trades. The Latin trades are a useful tool for building these trades when $v\equiv 0$ (mod $3$), so we use some obtained results on $3$-way $d$-homogeneous Latin trades.

A {\sf Latin square} of order $n$ is an $n\times n$ array $L = ({\ell_{ij}})$ usually on the set $N=\{1,2,...,n\} $ where each element of $N$ appears exactly once in each row and exactly once in each column. We can represent each Latin square as a subset of $N \times N\times N$,
$ L = \{(i, j; k)|$ element $k$ is located in position $(i, j)\} $. 
A {\sf partial Latin square} of order $n$ is an $n\times n$ array $P = ({p_{ij}})$ of elements from the set $N$ where each element of $N$ appears at most once in each row and at most once in each column. The set $S_{P} = \{ (i, j)|~ (i, j; k) \in P \}$  of the partial Latin square $P$ is called the {\sf shape} of $P$ and $|S_{P}|$ is called the {\sf volume} of $P$. 

A {\sf $\mu$-way Latin trade}, $(L_{1}, L_{2}, . . . , L_{\mu})$, of volume $s$ is a collection of $\mu$ partial Latin squares $L_{1}, L_{2}, . . . , L_{\mu}$ containing exactly the same $s$ filled cells, such that if cell $(i, j)$ is filled, it contains a different entry in each of the $\mu$ partial Latin squares, and such that row $i$ in each of the $\mu$ partial Latin squares contains, set-wise, the same symbols and column $j$, likewise. Adams et al.{\cite{3Latin-trade}} studied $\mu$-way Latin trades.
A $\mu$-way Latin trade which is obtained from another one by deleting its empty rows and empty columns, is called a {\sf $\mu$-way $d$-homogeneous Latin trade} $($for $\mu\leq d)$ or briefly a {\sf $(\mu, d, m)$} Latin trade, if it has $m$ rows and in each row and each column $L_{r}$ for $1 \leq r \leq \mu$, contains exactly $d$ elements, and each element appears in $L_{r}$ exactly $d$ times.
Bagheri et al.{\cite{bagh}}, studied the $\mu$-way $d$-homogeneous Latin trades and their main result is as follows:
\begin{thmm}\textsc{{\textsc{\cite{bagh}}}}{\label{Latin trade}}
All $(3, d, m)$ Latin trades $($for $4\leq d\leq m)$ exist, for
\begin{itemize}
\item[{\sf (a)}] $d=4$, except for $m = 6$ and $7$ and possibly for $m = 11$,
\item[{\sf (b)}] $d=5$, except possibly for $m=6$,
\item[{\sf (c)}] $6\leq d\leq 13$,
\item[{\sf (d)}] $d=15$,
\item[{\sf (e)}] $d\geq 4$ and $m\geq d^{2}$,
\item[{\sf (f)}] $m$ a multiple of $5$, except possibly for $m=30$,
\item[{\sf (g)}] $m$ a multiple of $7$, except possibly for $m=42$ and $(3,4,7)$ Latin trade.
\end{itemize}
\end{thmm}

All $3$-way $(v,3,2)$ Steiner trades are characterized in {\cite{Rashidi-3way}}. The authors proved that there is no $3$-way $(v,3,2)$ Steiner trade of volumes $1,2,3,4,5$ and $7$. Also they showed that the $3$-way $(v,3,2)$ Steiner trade of volume $6$ is unique (where the number of occurrences of each element is not the same). So the following proposition is clear.
\begin{prop}{\label{[1,7]}}
The $3$-way $d$-homogeneous $(v,3,2)$ Steiner trade of volume $m$ does not exist for $m\in \{1,2,...,7\}$.
\end{prop}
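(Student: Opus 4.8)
The plan is to derive everything from the characterization of $3$-way $(v,3,2)$ Steiner trades recorded in {\cite{Rashidi-3way}} and quoted in the paragraph preceding the statement, splitting the argument according to whether $m=6$ or not.

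First I would dispose of the values $m\in\{1,2,3,4,5,7\}$. For these volumes {\cite{Rashidi-3way}} asserts that \emph{no} $3$-way $(v,3,2)$ Steiner trade exists at all. Since every $d$-homogeneous $3$-way $(v,3,2)$ Steiner trade is, by definition, in particular a $3$-way $(v,3,2)$ Steiner trade, the absence of any such trade of these volumes immediately rules out the existence of a $d$-homogeneous one. No further computation is needed here.

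The only remaining case is $m=6$, and it is the only place where something must actually be checked. For this volume a $3$-way $(v,3,2)$ Steiner trade does exist and, again by {\cite{Rashidi-3way}}, it is essentially unique. The key point is that in this unique trade the replication numbers $r_x$ are not all equal: the number of occurrences of the elements of found$(T)$ differs from element to element. But $d$-homogeneity requires precisely $r_x=d$ for every $x\in V$, that is, a constant replication number. Since the unique volume-$6$ trade fails this constancy, it cannot be $d$-homogeneous for any value of $d$, and therefore no $d$-homogeneous $3$-way $(v,3,2)$ Steiner trade of volume $6$ exists.

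The argument is short because it leans entirely on the quoted characterization; the only obstacle is to make sure the uniqueness result from {\cite{Rashidi-3way}} is invoked precisely enough to guarantee non-constant replication numbers, so that homogeneity is genuinely impossible and not merely left open. Once that is in hand, combining the two cases gives non-existence for all $m\in\{1,2,\dots,7\}$, completing the proof.
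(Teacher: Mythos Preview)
Your argument is correct and is exactly the one the paper intends: it states the proposition as ``clear'' immediately after recalling from \cite{Rashidi-3way} that no $3$-way $(v,3,2)$ Steiner trade of volume $1,2,3,4,5,7$ exists and that the unique one of volume $6$ has non-constant replication numbers. There is nothing to add.
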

\begin{remark}{\label{dv/3}}
Since the volume of a $\mu$-way $d$-homogeneous $(v,3,2)$ Steiner trade is $dv/3$, at least one of $d$ or $v$ should be multiple of $3$. 
\end{remark}
\begin{remark}{\label{v>=2d+1}}
In a $\mu$-way $d$-homogeneous $(v,3,2)$ Steiner trade, since every element should belong to $d$ blocks and the other elements of these blocks should be different, so $v\geq 2d+1$.  
\end{remark}

\begin{lem}{\label{v1+v2}}
If there exist two $3$-way $d$-homogeneous $(v_{1},3,2)$ and $(v_{2},3,2)$ Steiner trades of volume $m_{1}$ and $m_{2}$, respectively, then we have a $3$-way $d$-homogeneous $(v_{1}+v_{2},3,2)$ Steiner trade of volume $m_{1}+m_{2}$.
\end{lem}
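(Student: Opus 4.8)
The plan is to place the two trades on disjoint foundations and form their component-wise union. Write $T=\{T_1,T_2,T_3\}$ for the given $3$-way $d$-homogeneous $(v_1,3,2)$ Steiner trade, with found$(T)=V$, $|V|=v_1$ and volume $m_1$, and $T'=\{T'_1,T'_2,T'_3\}$ for the $(v_2,3,2)$ one, with found$(T')=V'$, $|V'|=v_2$ and volume $m_2$. After relabelling the elements of $T'$ I may assume $V\cap V'=\emptyset$. I then set $S_i=T_i\cup T'_i$ for $i=1,2,3$ and claim that $S=\{S_1,S_2,S_3\}$ is the desired $3$-way $d$-homogeneous $(v_1+v_2,3,2)$ Steiner trade.

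The key observation that makes every verification routine is that each block of $T_i$ lies entirely inside $V$ while each block of $T'_i$ lies entirely inside $V'$; hence no pair of elements with one point in $V$ and one in $V'$ is covered by any block, and the two families $T_i$ and $T'_i$ share no block. From this I would check the defining properties in turn. For the volume, since $T_i$ and $T'_i$ are block-disjoint, $|S_i|=m_1+m_2$. For disjointness of the three collections, $S_i\cap S_j=(T_i\cap T_j)\cup(T'_i\cap T'_j)=\emptyset$, because $T$ and $T'$ are each $3$-way trades and the cross terms $T_i\cap T'_j$ are empty by the disjoint foundations.

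For the trade property, fix a $2$-subset $P$ of $V\cup V'$. If $P\subseteq V$ then only blocks of $T_i$ can contain it, so its number of occurrences in $S_i$ equals that in $T_i$, which is independent of $i$; symmetrically if $P\subseteq V'$; and if $P$ meets both $V$ and $V'$ then it occurs zero times in every $S_i$, so again the count is independent of $i$. The Steiner condition is inherited the same way: a pair inside $V$ occurs at most once in $T_1$ (respectively $T_j$), a pair inside $V'$ at most once in $T'_1$, and a straddling pair not at all. Finally, for $d$-homogeneity, an element $x\in V$ lies only in blocks of $T_i$ and so appears in exactly $d$ blocks of $S_i$, and likewise for $x\in V'$; thus every element of found$(S)=V\cup V'$ has replication number $d$, and $|V\cup V'|=v_1+v_2$.

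There is essentially no obstacle here: the disjointness of the foundations removes all interaction between the two trades, so each property of $S$ reduces to the corresponding property of $T$ or of $T'$. The only point requiring a word of care is the harmless relabelling that guarantees $V\cap V'=\emptyset$, which is legitimate because all the trade properties are invariant under renaming the elements of the ground set.
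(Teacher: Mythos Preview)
Your proof is correct and follows exactly the same approach as the paper: relabel so that the foundations are disjoint and take the component-wise union $S_i=T_i\cup T'_i$. You simply spell out in more detail the routine verifications that the paper leaves implicit.
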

\begin{proof}
{Let $T = \{T_{1} , T_{2}, T_{3} \}$ be a $3$-way $d$-homogeneous $(v_{1},3,2)$ Steiner trade of volume $m_{1}$ and $T' = \{T'_{1} , T'_{2}, T'_{3} \}$ be a $3$-way $d$-homogeneous $(v_{2},3,2)$ Steiner trade of volume $m_{2}$. It is enough to relabel the elements of found$(T')$, such that found$(T)\cap$ found$(T')=\emptyset$. It is clear that $S = \{T_{1}\cup T'_{1} , T_{2}\cup T'_{2}, T_{3}\cup T'_{3} \}$ is a $3$-way $d$-homogeneous $(v_{1}+v_{2},3,2)$ Steiner trade of volume $m_{1}+m_{2}$.}
\end{proof}

The following lemma which is similar to Lemma $2$ of \cite{minimal}, shows how to construct a $3$-way $d$-homogeneous $(3m,3,2)$ Steiner trade, by using $3$-way $d$-homogeneous Latin trade of order $m$.
\begin{lem}{\label{la-to-tr}}
Let $(L_{1},L_{2},L_{3})$ be a $3$-way $d$-homogeneous Latin trade of order $m$. For each $\alpha\in \{1,2,3\}$, define $T_{\alpha}=\{\{i_1,j_2,k_3\}|~(i,j;k)\in L_{\alpha}\}$. Then $T=\{T_{1},T_{2},T_{3}\}$ is a $3$-way $d$-homogeneous $(3m,3,2)$ Steiner trade.
\end{lem}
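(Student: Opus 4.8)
The plan is to fix the foundation as three disjoint copies of $\{1,\dots,m\}$, one tied to rows, one to columns, and one to symbols, so that $V=\{1_1,\dots,m_1\}\cup\{1_2,\dots,m_2\}\cup\{1_3,\dots,m_3\}$ has exactly $3m$ elements, and then to translate each defining property of the Latin trade into the corresponding property of $T$. The single observation that drives the whole argument is a dictionary between the two objects: a filled cell $(i,j)$ corresponds to the pair $\{i_1,j_2\}$, an occurrence of symbol $k$ in row $i$ corresponds to the pair $\{i_1,k_3\}$, and an occurrence of symbol $k$ in column $j$ corresponds to the pair $\{j_2,k_3\}$; conversely a block $\{i_1,j_2,k_3\}$ records exactly that cell $(i,j)$ holds symbol $k$. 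Since the three partial Latin squares share the same filled cells, all three $T_\alpha$ have the same number of blocks, namely the common volume (equal to the number of filled cells), and each block genuinely has size $3$ because it uses one element from each copy.

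First I would check that $T_1,T_2,T_3$ are pairwise disjoint, which is forced by the requirement that a filled cell contains a \emph{different} entry in each $L_\alpha$: if $\{i_1,j_2,k_3\}$ lay in both $T_\alpha$ and $T_\beta$, then cell $(i,j)$ would hold symbol $k$ in both $L_\alpha$ and $L_\beta$, a contradiction. Next I would establish $d$-homogeneity, which simultaneously pins down found$(T)=V$: the element $i_1$ lies precisely in the blocks coming from filled cells of row $i$, and by hypothesis row $i$ has exactly $d$ filled cells in $L_\alpha$; symmetrically $j_2$ sits in the $d$ blocks from column $j$, and $k_3$ in the $d$ blocks from the $d$ cells holding symbol $k$. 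Hence every element of $V$ occurs in exactly $d$ blocks of each $T_\alpha$, so all $3m$ elements are used and $T$ is $d$-homogeneous.

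The remaining part is to confirm the Steiner and trade properties, which I would handle together by a case analysis over the type of $2$-subset $P\subseteq V$. If $P$ lies inside a single copy it is contained in no block, contributing count $0$ in each $T_\alpha$. Otherwise $P$ is one of $\{i_1,j_2\}$, $\{i_1,k_3\}$, or $\{j_2,k_3\}$. For $\{i_1,j_2\}$, at most one symbol occupies cell $(i,j)$, so $P$ lies in at most one block of $T_\alpha$ (the Steiner bound), and because the shape is common to $L_1,L_2,L_3$ the cell is filled in all three squares or in none, giving equal counts (the trade condition). For $\{i_1,k_3\}$, a symbol appears at most once per row, giving the Steiner bound, while the Latin-trade requirement that each row carries set-wise the same symbols in all three squares gives equal counts; the pair $\{j_2,k_3\}$ is treated identically using columns.

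The step that must be executed carefully—though it is the only genuinely structural point rather than pure bookkeeping—is matching each of the three mixed pair-types to the correct axiom of the $3$-way Latin trade: common filled cells for row--column pairs, equal row-content for row--symbol pairs, and equal column-content for column--symbol pairs. Once this correspondence is in place, the conclusion that every $2$-subset occurs $0$ or $1$ times, and the same number of times in each $T_\alpha$, is automatic, so $T=\{T_1,T_2,T_3\}$ is a $3$-way $d$-homogeneous $(3m,3,2)$ Steiner trade as claimed.
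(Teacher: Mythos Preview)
Your proof is correct: the dictionary between filled cells, row--symbol incidences, and column--symbol incidences on one side, and the three mixed pair-types on the other, is exactly what makes each axiom of the $3$-way Latin trade translate into the corresponding Steiner and trade condition, and your case analysis handles all $2$-subsets cleanly.

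As for comparison, the paper does not actually give a proof of this lemma; it merely states the result and remarks that it is analogous to Lemma~2 of \cite{minimal}. So your argument supplies what the paper omits. The approach you take---checking disjointness, $d$-homogeneity, and then the Steiner/trade conditions via the three pair-types---is the natural one and is essentially the same as the argument in the cited reference (adapted from $\mu=2$ to $\mu=3$), so there is no genuinely different route here, just a full write-up where the paper defers to the literature.
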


The following theorem is a consequence of Theorem \ref{Latin trade} and Lemma \ref{la-to-tr}.
\begin{thm} {\label{(3m,3,2)}}
All $3$-way $d$-homogeneous $(3m,3,2)$ Steiner trades $($for $4\leq d\leq m)$ exist for
\begin{itemize}
\item[{\sf (a)}] $d=4$, except possibly for $m = 6$, $7$ and $11$,
\item[{\sf (b)}] $d=5$, except possibly for $m=6$,
\item[{\sf (c)}] $6\leq d\leq 13$,
\item[{\sf (d)}] $d=15$,
\item[{\sf (e)}] $d\geq 4$ and $m\geq d^{2}$,
\item[{\sf (f)}] $m\equiv 0~($mod $5)$, except possibly for $m=30$,
\item[{\sf (g)}] $m\equiv 0~($mod $7)$, except possibly for $m=7~($where $d=4)$ and $m=42$.
\end{itemize}
\end{thm}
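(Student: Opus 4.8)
The plan is to obtain this theorem as an immediate corollary of Theorem \ref{Latin trade} together with Lemma \ref{la-to-tr}. Lemma \ref{la-to-tr} supplies a construction that turns any $3$-way $d$-homogeneous Latin trade of order $m$ (equivalently, any $(3,d,m)$ Latin trade) into a $3$-way $d$-homogeneous $(3m,3,2)$ Steiner trade. So for every triple $(3,d,m)$ for which Theorem \ref{Latin trade} guarantees a Latin trade, applying Lemma \ref{la-to-tr} produces the desired Steiner trade on foundation $3m$. The first step, then, is simply to run through the existence list (a)--(g) of Theorem \ref{Latin trade} and, in each case, transport the conclusion across Lemma \ref{la-to-tr}, keeping the restriction $4\leq d\leq m$ that both statements share.

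For most cases this transport is verbatim. Parts (b), (c), (d), (e), and (f) have identical parameter ranges and identical exception sets in the two theorems, so nothing is needed beyond invoking the construction. The one point requiring care is that Lemma \ref{la-to-tr} runs in only one direction: the existence of a Latin trade implies the existence of a Steiner trade, but the \emph{failure} of a Latin trade to exist says nothing about the corresponding Steiner trade. This asymmetry is exactly what distinguishes the exception clauses of parts (a) and (g) across the two theorems.

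Concretely, in Theorem \ref{Latin trade}(a) the values $m=6$ and $m=7$ are listed as \emph{definite} non-existences for $(3,4,m)$ Latin trades, whereas $m=11$ is only a possible exception. Since one cannot conclude that the associated $(18,3,2)$ and $(21,3,2)$ Steiner trades fail to exist merely because the Latin trades do, all three values $m=6,7,11$ must be downgraded to \emph{possible} exceptions in part (a) here. The same phenomenon governs part (g): the definitely-absent $(3,4,7)$ Latin trade (established in Theorem \ref{Latin trade}(a)) forces $m=7$ with $d=4$ to appear only as a possible exception for the Steiner trades, while $m=42$ is carried over directly as a possible exception from Theorem \ref{Latin trade}(g).

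I expect the only real subtlety--rather than a genuine obstacle--to be this bookkeeping of the exception sets: one must resist copying the ``except for'' clauses of Theorem \ref{Latin trade} as bona fide non-existence results for Steiner trades, because the construction of Lemma \ref{la-to-tr} is not known to be reversible. A converse to Lemma \ref{la-to-tr}, were one available, would let us sharpen the ``possibly'' exceptions in parts (a) and (g) into definite ones; lacking such a converse, the conservative statement above is precisely what the two cited results yield.
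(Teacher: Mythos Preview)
Your proposal is correct and matches the paper's approach exactly: the paper states this theorem simply as ``a consequence of Theorem~\ref{Latin trade} and Lemma~\ref{la-to-tr}'' with no further argument. Your discussion of why the definite non-existences in Theorem~\ref{Latin trade}(a) and~(g) become only \emph{possible} exceptions here is more detailed than anything the paper provides, and is the correct reading of the one-directional nature of Lemma~\ref{la-to-tr}.
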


The existence of $3$-way $d$-homogeneous $(v,3,2)$ Steiner trades when $v$ is not multiple of $3$, will be investigated later.

A {\sf Steiner triple system} of order $v$ (briefly $STS(v)$) is a pair $(X,B)$ where $X$ is a $v$-set and $B$ is a collection of $3$-subsets of $X$ (called {\sf triple}) such that every pair of distinct elements of $X$ belongs to exactly one triple of $B$. It is well known that a $STS(v)$ exists if and only if $v\equiv 1, 3$ (mod $6$). A {\sf Kirkman triple system} of order $v$ (briefly $KTS(v)$) is a Steiner triple system of order $v$, $(X,B)$ together with a partition $R$ of the set of triples $B$ into subsets $R_{1},R_{2},...,R_{n}$ called parallel classes such that each $R_{i}~($for $i=1, 2, ..., n)$ is a partition of $X$.
\begin{thm}{\label{3STS}}
There exists a $3$-way $ ((v-1)/2)$-homogeneous $(v,3,2)$ Steiner trade of volume $v(v-1)/6$ for every $v\equiv 1, 3$ $($mod $6)$, $v\neq 7$.
\end{thm}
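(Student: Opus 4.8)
The plan is to realize the three collections $T_{1},T_{2},T_{3}$ as three pairwise disjoint Steiner triple systems on a common point set $V$ of size $v$. Since every order in the statement satisfies $v\equiv 1,3\ (\mathrm{mod}\ 6)$, an $STS(v)$ exists, and the numerics are exactly right: each point of an $STS(v)$ lies in $r=(v-1)/2$ triples and the system has $v(v-1)/6$ triples in total, which are precisely the homogeneity parameter $d=(v-1)/2$ and the volume $m=v(v-1)/6$ demanded here. So if I can produce three systems $S_{1},S_{2},S_{3}$ on $V$ with $S_{i}\cap S_{j}=\emptyset$ for $i\neq j$, I set $T_{\alpha}=S_{\alpha}$ and the parameters fall out for free.

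The verification that $T=\{T_{1},T_{2},T_{3}\}$ is then a $3$-way $d$-homogeneous $(v,3,2)$ Steiner trade is immediate from the defining property of an $STS$. The three collections are disjoint by construction. In each $S_{\alpha}$ every pair of points of $V$ is contained in exactly one triple; hence for every $2$-subset of $V$ the number of blocks containing it equals $1$ in each of $T_{1},T_{2},T_{3}$, so these numbers agree and the trade condition for $t=2$ holds. The same property shows that no pair occurs more than once in any single $T_{\alpha}$, which is exactly the Steiner condition, and that each element lies in precisely $(v-1)/2=d$ blocks of each $T_{\alpha}$ with common foundation $V$, giving $d$-homogeneity.

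Thus everything reduces to a single combinatorial existence question: for which $v$ do three pairwise disjoint copies of $STS(v)$ exist on one point set? This is the only real content, and it is where $v\neq 7$ enters. I would invoke the classical theory of large sets of disjoint Steiner triple systems: for every $v\equiv 1,3\ (\mathrm{mod}\ 6)$ with $v\geq 9$ the full collection $\binom{V}{3}$ of triples partitions into $v-2$ mutually disjoint copies of $STS(v)$ (the Lu--Teirlinck theorem), from which I simply select any three components $S_{1},S_{2},S_{3}$; since $v-2\geq 7$ for every such $v$, three disjoint systems are always available. The two remaining admissible orders $v=1$ and $v=3$ give volume $0$ and $1$ and are vacuous or already excluded by Proposition~\ref{[1,7]}, so I may assume $v\geq 9$.

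The main obstacle is the genuinely exceptional order $v=7$: no large set of $STS(7)$ exists, and one cannot extract three pairwise disjoint copies of the Fano plane, so the construction simply fails there. This is consistent with the rest of the paper rather than a gap, because the trade promised for $v=7$ would have volume $7\cdot 6/6=7$, and Proposition~\ref{[1,7]} already forbids a $3$-way $d$-homogeneous Steiner trade of volume $7$; hence no construction of any kind can succeed for $v=7$, which is exactly why it is removed from the statement. After granting the existence of the large set for $v\geq 9$, the construction and its verification are routine, so the deep combinatorial input (existence of the large set, equivalently of three disjoint triple systems) is the crux, while all the trade-theoretic bookkeeping is automatic.
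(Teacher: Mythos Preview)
Your proof is correct and follows exactly the paper's approach: take $T_{\alpha}=S_{\alpha}$ for three pairwise disjoint Steiner triple systems $S_{1},S_{2},S_{3}$ on $V$, whose existence the paper obtains by citing Milici and Quattrocchi \cite{intersection} (who establish three disjoint $STS(v)$ directly for every admissible $v\neq 7$) rather than the full Lu--Teirlinck large-set theorem you invoke. The verification that this yields a $3$-way $((v-1)/2)$-homogeneous Steiner trade of volume $v(v-1)/6$ is, as you say and as the paper also asserts, immediate.
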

\begin{proof}
{In {\cite{intersection}}, it is shown that there exist $3$ disjoint $STS(v)$ for every $v\equiv 1, 3$ (mod $6$), $v\neq 7$. It is obvious that the $3$-way trade $T=\{T_{1}, T_{2}, T_{3}\}$, where $T_{1}, T_{2}$ and $T_{3}$ are the disjoint $STS(v)$s, is the desired trade.
}
\end{proof}
\section{$3$-way $3$-homogeneous $(v,3,2)$ Steiner trades}
In this section we answer Question \ref{ques1} when $\mu =d=3$. Note that by Remark \ref{dv/3} there is no restriction for $v$ in this case.
\begin{lem}{\label{(9l,3,2)}}
For every $v=9\ell$, where $\ell\in\left\lbrace 1,2,3,\cdots\right\rbrace $, there exists a $3$-way $3$-homogeneous $(v,3,2)$ Steiner trade of volume $v$.
\end{lem}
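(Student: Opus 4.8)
The plan is to reduce the statement to a single base case and then propagate it with Lemma~\ref{v1+v2}. Concretely, I would first construct one $3$-way $3$-homogeneous $(9,3,2)$ Steiner trade of volume $9$ (the case $\ell=1$), and then obtain the general case $v=9\ell$ by applying Lemma~\ref{v1+v2} a total of $\ell-1$ times to $\ell$ relabelled, pairwise foundation-disjoint copies of this base trade. Since each copy has foundation size $9$ and volume $9$, the result has foundation size $9\ell$ and volume $9\ell$, exactly as required. Thus the whole lemma rests on producing the $\ell=1$ object.

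For the base case my approach is to invoke Lemma~\ref{la-to-tr} with $m=3$, so that I only need a $3$-way $3$-homogeneous Latin trade of order $3$, i.e.\ a $(3,3,3)$ Latin trade. Note that Theorem~\ref{Latin trade} is of no help here, since it only asserts existence for $d\geq 4$; the required $d=3$ object must be exhibited by hand. Fortunately this is immediate: working on the symbol set $\mathbb{Z}_{3}=\{0,1,2\}$, I would take the three cyclic Latin squares $L_{\alpha}=(\ell^{(\alpha)}_{ij})$ defined by $\ell^{(\alpha)}_{ij}\equiv i+j+\alpha \pmod 3$ for $\alpha\in\{1,2,3\}$. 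Each $L_{\alpha}$ is a Latin square of order $3$, and in every cell $(i,j)$ the three entries $i+j+1,\,i+j+2,\,i+j+3$ run over all of $\mathbb{Z}_{3}$, so they are pairwise distinct; since every row and every column of each $L_{\alpha}$ already contains all three symbols, the triple $(L_{1},L_{2},L_{3})$ satisfies the row/column and distinct-entry conditions and is $3$-homogeneous with $m=\mu=d=3$ (the admissible boundary case $\mu\leq d$).

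Feeding $(L_{1},L_{2},L_{3})$ into Lemma~\ref{la-to-tr} yields $T=\{T_{1},T_{2},T_{3}\}$ with $T_{\alpha}=\{\{i_{1},j_{2},k_{3}\}\mid(i,j;k)\in L_{\alpha}\}$ on the $9$-element foundation consisting of three disjoint copies of $\{0,1,2\}$, which is the desired base trade. One should check directly that each of the three pair types $\{i_{1},j_{2}\}$, $\{i_{1},k_{3}\}$, $\{j_{2},k_{3}\}$ occurs exactly once in each $T_{\alpha}$ (the first because a cell is determined by $(i,j)$, the latter two because a row, respectively a column, contains each symbol once), which simultaneously gives the Steiner property and the equality of pair-frequencies across $T_{1},T_{2},T_{3}$. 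I do not anticipate a genuine obstacle: the only point requiring care is that the cited Latin-trade theorem stops at $d=4$, so the essential step is simply recognising that the $d=3$ seed must be supplied separately, after which Lemma~\ref{la-to-tr} and Lemma~\ref{v1+v2} do all the remaining work.
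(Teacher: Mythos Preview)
Your argument is correct. The paper's own proof also goes through Latin trades and Lemma~\ref{la-to-tr}, but with a small structural difference: it cites Corollary~1 of \cite{bagh}, which gives a $(3,3,m)$ Latin trade for every $m$ divisible by~$3$, and then applies Lemma~\ref{la-to-tr} once with $m=3\ell$ to obtain the $(9\ell,3,2)$ Steiner trade directly. You instead build only the seed $(3,3,3)$ Latin trade explicitly (correctly observing that Theorem~\ref{Latin trade} does not cover $d=3$), convert it via Lemma~\ref{la-to-tr} to the $(9,3,2)$ Steiner trade, and then propagate to $v=9\ell$ at the Steiner-trade level with Lemma~\ref{v1+v2}. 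Your route is slightly more self-contained since it avoids the external corollary, while the paper's route avoids the induction on $\ell$; otherwise the two arguments are essentially equivalent.
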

\begin{proof}
{According to Corollary 1 of {\cite{bagh}}, there exists a $(3,3,m)$ Latin trade if and only if $3|m$. By Lemma \ref{la-to-tr} we can obtain a $3$-way $3$-homogeneous $(v,3,2)$ Steiner trade of volume $v$ from a $(3,3,m)$ Latin trade, where $v=3m$. 
}
\end{proof}
\begin{lem}{\label{(8l,3,2)}}
For every $v=8\ell$, where $\ell\in\left\lbrace 1,2,3,\cdots\right\rbrace $, there exists a $3$-way $3$-homogeneous $(v,3,2)$ Steiner trade of volume $v$.
\end{lem}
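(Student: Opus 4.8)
The plan is to establish a single base trade on $8$ points and then blow it up additively. Concretely, I would first construct one $3$-way $3$-homogeneous $(8,3,2)$ Steiner trade of volume $8$; writing $8\ell = 8+8+\cdots+8$ ($\ell$ summands) and applying Lemma \ref{v1+v2} repeatedly (each time to a freshly relabelled copy) then yields a $3$-way $3$-homogeneous $(8\ell,3,2)$ Steiner trade of volume $8\ell$. Thus everything reduces to the single case $v=8$.

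To pin down what such a base trade must look like, I would first analyse the combinatorics on $8$ points. In each $T_\alpha$ every element lies in exactly $3$ blocks, each contributing two distinct pairs (by the Steiner condition), so every element is covered together with exactly $6$ of the other $7$ points; hence each $T_\alpha$ covers $24$ of the $\binom{8}{2}=28$ pairs, and the $4$ uncovered pairs form a $1$-regular graph, i.e.\ a perfect matching $M$. The trade condition forces the covered pair-set to be identical in $T_1,T_2,T_3$, so $M$ is the same for all three and each $T_\alpha$ is a decomposition into triangles of the cocktail-party graph $K_8-M$. The task therefore becomes: exhibit three pairwise disjoint triangle decompositions of $K_8-M$. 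Each such decomposition has $24/3=8$ blocks and puts every point (of degree $6$ in $K_8-M$) into $6/2=3$ blocks, so it is automatically $3$-homogeneous.

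To build these, I would place the points on $\mathbb{Z}_8$ with $M=\{\,\{i,i+4\}: i\in\mathbb{Z}_8\,\}$, so that $K_8-M$ is exactly the graph whose edges have differences $1,2,3$. Developing the base block $\{0,1,3\}$ cyclically gives one decomposition $T_1$ (its three differences $1,2,3$ each sweep out a full difference class exactly once), and developing $\{0,1,6\}$ gives a second decomposition $T_2$; since $\{0,1,6\}$ lies in a different $\mathbb{Z}_8$-orbit from $\{0,1,3\}$, the collections $T_1$ and $T_2$ are disjoint. The subtlety, which I expect to be the main obstacle, is that these are the \emph{only} two cyclic decompositions: the remaining two $\mathbb{Z}_8$-orbits, represented by $\{0,1,2\}$ and by $\{0,2,5\}$, each repeat a difference class and so cover $K_8-M$ unevenly. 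A purely cyclic construction thus stalls at two decompositions, and the third must be assembled by hand out of the leftover triangles.

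For the third decomposition I would take
$T_3=\{\,\{i,i+1,i+2\}: i\in\{0,2,4,6\}\,\}\cup\{\,\{i,i+2,i+5\}: i\in\{1,3,5,7\}\,\}$
and verify directly that its $8$ blocks use each of the $24$ edges of $K_8-M$ exactly once and place each point in precisely $3$ blocks. Because $T_3$ is drawn entirely from the two non-cyclic orbits (the $\{0,1,2\}$-type and $\{0,2,5\}$-type triangles), it is disjoint from both $T_1$ and $T_2$. Having checked that $\{T_1,T_2,T_3\}$ consists of three disjoint collections, each an $8$-block, $3$-homogeneous triangle decomposition of the common graph $K_8-M$, I would conclude that it is the required $3$-way $3$-homogeneous $(8,3,2)$ Steiner trade of volume $8$, and the reduction above then finishes the proof for all $v=8\ell$.
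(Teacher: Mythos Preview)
Your proposal is correct and follows essentially the same approach as the paper: exhibit a single $3$-way $3$-homogeneous $(8,3,2)$ Steiner trade and then invoke Lemma~\ref{v1+v2} repeatedly to reach $v=8\ell$. The only difference is in presentation of the base case: the paper simply writes down an $8$-block table, whereas you derive the example more conceptually as three disjoint triangle decompositions of $K_8-M$ built (mostly) by cyclic development on $\mathbb{Z}_8$; your explicit $T_1,T_2,T_3$ check out, so this extra structure is a nice bonus but not a genuinely different route.
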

\begin{proof}
{The following trade is a $3$-way $3$-homogeneous $(8,3,2)$ Steiner trade.
\begin{center}
$
\begin{array}{c|c c c c c c c c}
 T_{1}& 123 & 147 & 158 & 248 & 267 & 357 & 368 & 456\\
\hline T_{2}& 124 & 138 & 157 & 237 & 268 & 467 & 458 & 356\\
\hline T_{3}& 127 & 135 & 148 & 246 & 238 & 367 & 457 & 568\\
\end{array}
$
\end{center}
So the proof is obvious by Lemma {\ref{v1+v2}}.
}\end{proof}

The last two lemmas and Lemma \ref{v1+v2} yields the following theorem.
\begin{thm}{\label{9l+8l}}
For every nonzero $v=9\ell+8\ell'$ , where $\ell,\ell'\in\left\lbrace 0,1,2,3,\cdots\right\rbrace $, there exists a $3$-way $3$-homogeneous $(v,3,2)$ Steiner trade of volume $v$.
\end{thm}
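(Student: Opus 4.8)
The goal is to prove Theorem~\ref{9l+8l}: for every nonzero $v$ expressible as $9\ell+8\ell'$ with $\ell,\ell'$ nonnegative integers, there exists a $3$-way $3$-homogeneous $(v,3,2)$ Steiner trade of volume $v$. The plan is to assemble such a trade from the two families of building blocks already secured in the immediately preceding lemmas, using the disjoint-union construction of Lemma~\ref{v1+v2} as the glue. Specifically, Lemma~\ref{(9l,3,2)} supplies a $3$-way $3$-homogeneous $(9,3,2)$ Steiner trade of volume $9$ (taking $\ell=1$ there), and Lemma~\ref{(8l,3,2)} supplies a $3$-way $3$-homogeneous $(8,3,2)$ Steiner trade of volume $8$. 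These are the only two ingredients I need.

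First I would fix a representation $v=9\ell+8\ell'$ with $\ell,\ell'\in\{0,1,2,\dots\}$ not both zero, as guaranteed by the hypothesis. The argument then proceeds by repeated application of Lemma~\ref{v1+v2}. Starting from the empty trade, I would adjoin $\ell$ copies of the $(9,3,2)$ block and $\ell'$ copies of the $(8,3,2)$ block, one at a time. At each step Lemma~\ref{v1+v2} tells me that combining a $3$-way $3$-homogeneous $(v_1,3,2)$ Steiner trade of volume $m_1$ with a $3$-way $3$-homogeneous $(v_2,3,2)$ Steiner trade of volume $m_2$ (on a relabelled, disjoint foundation) yields a $3$-way $3$-homogeneous $(v_1+v_2,3,2)$ Steiner trade of volume $m_1+m_2$. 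Crucially, the homogeneity parameter $d=3$ is the same for both ingredients and is preserved under the union, so no mismatch arises. After all the steps, the foundation has size $9\ell+8\ell'=v$ and the volume is likewise $9\ell+8\ell'=v$, matching the required volume exactly.

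I should note the one degenerate case: when $\ell=0$ (so $v=8\ell'$ with $\ell'\geq 1$) or $\ell'=0$ (so $v=9\ell$ with $\ell\geq 1$), the statement reduces directly to Lemma~\ref{(8l,3,2)} or Lemma~\ref{(9l,3,2)} respectively, and no mixing is needed; the nonzero hypothesis rules out the empty case $\ell=\ell'=0$. This makes the theorem essentially a clean corollary of the two lemmas together with the additivity of Lemma~\ref{v1+v2}.

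I do not anticipate a genuine obstacle here, since the heavy lifting was done in establishing the $(9,3,2)$ and $(8,3,2)$ trades. The only point requiring a word of care is the relabelling in Lemma~\ref{v1+v2}: when iterating the union I must keep all foundations pairwise disjoint, which is immediate since at each stage I am free to relabel the newly adjoined block onto fresh symbols. Once disjointness is maintained, the $3$-homogeneity and the Steiner property (each pair occurring at most once in each $T_i$) hold automatically on the union because pairs spanning two different blocks never occur at all, and within-block behaviour is inherited. Hence the induction closes and the theorem follows.
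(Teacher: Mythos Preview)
Your proposal is correct and follows exactly the paper's approach: the theorem is stated there as an immediate consequence of Lemmas~\ref{(9l,3,2)} and~\ref{(8l,3,2)} together with the additivity of Lemma~\ref{v1+v2}. The only cosmetic difference is that you iterate Lemma~\ref{v1+v2} on single copies of the $(9,3,2)$ and $(8,3,2)$ trades, whereas one could equally invoke the lemmas in their full $9\ell$ and $8\ell'$ forms and apply Lemma~\ref{v1+v2} once; either way the content is identical.
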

The following lemma can be used for characterizing $3$-way $3$-homogeneous $(v,3,2)$ Steiner trades of volume $v$.
\begin{lem}{\label{1-factor}}
There exist only two non-isomorphic $3$-way $(v,2,1)$ Steiner trade of volume $3$.
\end{lem}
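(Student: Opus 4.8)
The plan is to translate the combinatorial conditions into a statement about $1$-factorizations of a small graph. First I would observe that, since $t=1$ and the trade is Steiner, each element occurs in at most one block of $T_i$; as each block is a pair, every $T_i$ is a matching of three edges and hence covers exactly six elements. The trade condition on singletons forces $r_x$ to be constant across $T_1,T_2,T_3$, so the three collections have a common foundation of size $6$ and each $T_i$ is a \emph{perfect} matching on this $6$-set. Because the collections are pairwise disjoint, the three matchings are pairwise edge-disjoint, so their union $G$ is a cubic graph on $6$ vertices with $9$ edges, and $\{T_1,T_2,T_3\}$ is precisely a $1$-factorization of $G$. Conversely, every $1$-factorization of a cubic graph on six vertices gives such a trade, so the problem reduces to classifying these.

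Next I would invoke the classification of cubic graphs on six vertices: there are exactly two, namely $K_{3,3}$ and the triangular prism $C_3\times K_2$. There is no disconnected possibility, since every cubic graph has at least four vertices and $6$ cannot be written as a sum of at least two parts each $\geq 4$. Both graphs are of class $1$, hence each admits a $1$-factorization, producing two trades; and since $K_{3,3}$ is triangle-free while the prism contains triangles, these two trades are non-isomorphic.

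The remaining, and main, point is to show that \emph{each} graph yields only one trade up to isomorphism, i.e.\ that its $1$-factorization is essentially unique. For the prism, written with triangles $\{1,2,3\}$, $\{4,5,6\}$ and rungs $14,25,36$, I would enumerate its perfect matchings and find only four: the all-rungs matching and the three matchings using exactly one rung. The all-rungs matching cannot lie in any $1$-factorization, since its complement is two disjoint triangles, an odd structure with no perfect matching; hence the unique $1$-factorization consists of the three one-rung matchings. For $K_{3,3}$, identifying perfect matchings with permutations in $S_3$, a $1$-factorization corresponds to a sharply transitive set of three permutations; there are exactly two such sets (the even and the odd permutations), and left multiplication by a transposition — a relabelling of one side, hence a graph automorphism — carries one onto the other, so $K_{3,3}$ also has a unique $1$-factorization up to isomorphism.

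Finally I would assemble these facts: every $3$-way $(v,2,1)$ Steiner trade of volume $3$ is a $1$-factorization of $K_{3,3}$ or of the prism, each unique up to isomorphism, and the two are mutually non-isomorphic, giving exactly two non-isomorphic trades. I expect the uniqueness of the $1$-factorizations — in particular ruling out the all-rungs matching for the prism and verifying that the two factorizations of $K_{3,3}$ are related by an automorphism — to be the only step requiring genuine care; the reduction and the non-isomorphism of the two trades are routine.
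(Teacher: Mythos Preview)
Your argument is correct and lands on the same endpoint as the paper: both of you reduce the problem to $1$-factorizations of a cubic graph on six vertices and distinguish the two outcomes by whether that graph is bipartite ($K_{3,3}$) or not (the triangular prism).

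The organization differs. The paper normalizes first: it fixes $T_{1}$ and $T_{2}$, observes that any two disjoint perfect matchings on six points form an alternating $6$-cycle, and then asks which third matchings in $K_{6}$ can complete this $C_{6}$ to a $3$-regular graph, finding (up to isomorphism) exactly two, yielding a bipartite or a non-bipartite cubic graph. You instead classify the connected cubic graphs on six vertices up front and then prove that each of $K_{3,3}$ and the prism has a unique $1$-factorization up to automorphism. Your route is a bit more work --- you explicitly enumerate the four matchings of the prism, rule out the all-rungs matching, and handle the two cosets of $A_{3}$ in $S_{3}$ for $K_{3,3}$ --- but it makes the uniqueness of each factorization fully explicit, which the paper leaves implicit. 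Conversely, the paper's ``fix $T_{1},T_{2}$ first'' normalization is a tidy shortcut that avoids having to classify cubic graphs on six vertices from scratch.
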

\begin{proof}
{Let $T=(T_{1}, T_{2}, T_{3})$ be a $3$-way $(v,2,1)$ Steiner trade of volume $3$. Since $T$ is a Steiner trade, $|$found$(T)| =6$ and let found$(T)=\{1,2,...,6\}$. Without loss of generality we can assume that $T_1=\{12,34,56\}$ and $T_2=\{13,26,45\}$. Now if we consider the blocks of the trade as edges in as $6$ vertex graph and color the edges of each trade with a different color, then the first two trades form an alternating colored $6$-cycle. Up to isomorphism, there are only two different one factors in $K_6$ to complete this graph to a $3$-regular graph. So all trademates put together either form a bipartite or a non-bipartite $3$-regular graph. So $T=(T_{1},T_{2},T_{3})$ up to isomorphism, should be as one of the following trades:
\begin{center}
\begin{tabular}{c c }
$
\begin{array}{c|c|c}
T_{1} & T_{2} & T_{3}\\
\hline 12 & 13 & 14\\
34 & 26 & 25\\
56 & 45 & 36
\end{array}
$
&
$
\begin{array}{c|c|c}
T_{1} & T_{2} & T_{3}\\
\hline 12 & 13 & 15\\
34 & 26 & 24\\
56 & 45 & 36
\end{array}
$
\end{tabular}
\end{center}
}
\end{proof}
\begin{thm}{\label{main,d=3}}
Every $3$-way $3$-homogeneous $(v,3,2)$ Steiner trade of volume $v$ contains a $3$-way $3$-homogeneous $(u,3,2)$ Steiner trade of volume $8$ or $9$, as a subtrade.
\end{thm}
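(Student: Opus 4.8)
The plan is to show that the connected component of a single element is already the desired subtrade, and then to pin down its size via the derived-trade dichotomy of Lemma \ref{1-factor}. Call two elements \emph{linked} if some block of $T$ contains both, and let $U$ be the connected component of an arbitrarily chosen $x_0\in$ found$(T)$ under this relation. Because $T$ is $3$-homogeneous, each $u\in U$ lies in exactly three blocks of each $T_i$, and every such block consists only of elements linked to $u$, hence lying in $U$; so setting $T_i'=\{B\in T_i:\ B\subseteq U\}$ retains all three occurrences of each element of $U$ in each collection. One checks routinely that $T'=\{T_1',T_2',T_3'\}$ is again a $3$-way $3$-homogeneous $(u,3,2)$ Steiner trade of volume $u=|U|$. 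Thus it suffices to prove $|U|\in\{8,9\}$. By Remark \ref{v>=2d+1} and Proposition \ref{[1,7]} we already have $|U|\ge 8$, so the entire difficulty is the upper bound.

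Next I would analyse the local structure at $x_0$. Since $x_0$ lies in three blocks of each $T_i$ and $T$ is Steiner, the pairs $\{x_0,\cdot\}$ are distinct, so $x_0$ has exactly six neighbours $\{1,\dots,6\}$, the same set for all three $T_i$ by the trade property. Deleting $x_0$ from its blocks yields the derived trade $T_{x_0}$, a $3$-way $(v-1,2,1)$ Steiner trade of volume $3$, which by Lemma \ref{1-factor} is, up to isomorphism, one of the two displayed trades. Equivalently, the three induced perfect matchings on $\{1,\dots,6\}$ assemble into a cubic graph that is either bipartite or non-bipartite. I would treat these two cases separately and show they force, respectively, the volume-$9$ and the volume-$8$ trade.

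In the bipartite case the neighbours split as $\{a,b,c\}\cup\{d,e,f\}$ with every matching edge joining the two parts, so the three matchings partition $K_{3,3}$ and encode a Latin square of order $3$ whose rows index $T_1,T_2,T_3$, columns index $\{a,b,c\}$ and entries lie in $\{d,e,f\}$. For a covered cross-pair, say $\{x_0,a,d\}\in T_1$, the trade property forces $\{a,d\}$ to occur once in each of $T_2,T_3$, while the Steiner condition and the matchings at $x_0$ forbid $x_0$ as the third point; chasing these occurrences introduces exactly two new elements $y,z$ playing the role of the remaining two rows. Continuing the chase over all nine cross-pairs, I would verify that every forced block stays inside $\{x_0,y,z,a,b,c,d,e,f\}$, so $U$ closes at these nine points and $T'$ is isomorphic to the $(3,3,3)$-Latin-trade construction of Lemma \ref{(9l,3,2)}. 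In the non-bipartite case the cubic graph contains an odd cycle, and the analogous pair-chasing forces a single new element—the unique point not adjacent to $x_0$—after which all occurrences close on eight points, matching the explicit trade of Lemma \ref{(8l,3,2)}.

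The step I expect to be the main obstacle is precisely this termination, or rigidity: one must show that the forward chaining never escapes the intended eight- or nine-element set, and that the local type (bipartite versus non-bipartite) propagates consistently to the links of the newly introduced elements, so that no strictly larger closed component can occur. This is a finite but delicate case analysis, and the derived-trade dichotomy of Lemma \ref{1-factor}, now applied at the neighbours of $x_0$ and at $y,z$, is the device that keeps the number of sub-cases bounded and excludes any closure beyond volume $9$.
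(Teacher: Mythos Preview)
Your connected-component reduction is correct and is a clean way to phrase what the paper is doing implicitly: the blocks touching a fixed element, together with the blocks they force, do close off into a subtrade whose foundation is a union of components. The lower bound $|U|\ge 8$ via Proposition~\ref{[1,7]} is also fine. The approach is therefore essentially the paper's, rephrased; the paper simply starts the case analysis at $x$ and lets the forced blocks exhibit the closed $8$- or $9$-set directly, without naming the component.

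The genuine gap is your claimed dichotomy ``bipartite $\Rightarrow 9$, non-bipartite $\Rightarrow 8$''. This is false. In the paper's own analysis of the first (non-bipartite) form of $T_x$, case~3.1 produces a closed $9$-element subtrade on $\{x,1,\dots,6,a,b\}$, while case~1.3 produces a closed $8$-element subtrade. So the non-bipartite link at $x$ does not determine the size of the component; both outcomes occur, depending on how the pairs $\{1,3\},\{1,4\}$ are completed in $T_2$ and $T_3$ and on whether the auxiliary elements $a,b$ fall inside $\{5,6\}$. Moreover, the $9$-element trade arising in case~3.1 is \emph{not} tripartite (the block $13a$ already obstructs any partition separating $1$ and $3$), so it is not isomorphic to the Latin-trade construction you are aiming for in the bipartite branch; there is more than one isomorphism type of $3$-way $3$-homogeneous $(9,3,2)$ Steiner trade.

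Consequently your ``rigidity'' step cannot be organised as a single propagation argument per link type. What actually has to be done---and what the paper does---is the finer case split on how the remaining two blocks through a neighbour of $x$ sit relative to $\{1,\dots,6\}$ (the paper's cases~1--3 and their sub-cases), checking in each branch that one either reaches a contradiction or closes at $8$ or $9$ elements. Your outline stops exactly where the real work begins.
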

\begin{proof}
{Let  $T = \{T_{1} , T_{2}, T_{3} \}$ be a $3$-way $3$-homogeneous $(v,3,2)$ Steiner trade of volume $v$ with found$(T)=\{1,2,...,v\}$. Let $x\in$ found$(T)$, then $T_{x} = \{T_{1x} , T_{2x}, T_{3x} \}$ is a $(v,3,1)$ trade of volume $r_{x}=3$. Without loss of generality we can assume that found${(T_{x})=\{x,1,2,...,6\}}$. According to Lemma \ref{1-factor} there exist only two cases for $T_{x}$.
\begin{center}
\begin{tabular}{c c}
$
\begin{array}{c|c|c}
T_{1x} & T_{2x} & T_{3x}\\
\hline x12 & x13 & x14\\
x34 & x26 & x25\\
x56 & x45 & x36
\end{array}
$
&
$
\begin{array}{c|c|c}
T_{1x} & T_{2x} & T_{3x}\\
\hline x12 & x13 & x15\\
x34 & x26 & x24\\
x56 & x45 & x36
\end{array}
$
\end{tabular}
\end{center}
Let $T$ contains $T_{x}$ which is as the first form.
\\Since $T=\{T_{1}, T_{2}, T_{3}\}$ is a $3$-homogeneous trade, each of $T_{1}$, $T_{2}$ and $T_{3}$ should contain two other blocks containing element $1$. According to definition of Steiner trades, $T_{1}$ cannot contain block $134$ (since it has block $x34$). So there exist three possible cases:
\begin{list}{}{}
\item[1.] Only one of $T_{2}$ and $T_{3}$ contains block $123$ or $124$:
\\Let $T_{2}$ contains block $123$ (If $T_{3}$ contains block $124$, then the same result will be achieved).
\begin{center}
$
\begin{array}{c|c c c c c}
 T_{1} & x12 & x34 & x56 & 13a & 14b\\
\hline T_{2} & x14 & x25 & x36 & 123 & 1ab\\
\hline T_{3} & x13 & x26 & x45 & 12b & 14a
\end{array}
$
\end{center}
There are four possible cases for elements $a$ and $b$:
\begin{itemize}
\item[1.1.] $a,b\notin \{5,6\}$

The other blocks which contain $4$ should be as follows. The pair $3b$ exists in $T_{3}$ and does not exist in $T_{2}$, it is a contradiction with definition of trade.
\begin{center}
$
\begin{array}{c|c c c c c c c}
 T_{1} & x12 & x34 & x56 & 13a & 14b & 45a\\
\hline T_{2} & x14 & x25 & x36 & 123 & 1ab & 45b & 4a3\\
\hline T_{3} & x13 & x26 & x45 & 12b & 14a & 43b
\end{array}
$
\end{center}

\item[1.2.] $a=6$ and $b=5$

The other blocks of $T_{1}$ and $T_{2}$ which contain $6$ should be as follows. It is a contradiction with definition of trade.
\begin{center}
$
\begin{array}{c|c c c c c c}
 T_{1} & x12 & x34 & x56 & 136 & 145 & 624\\
\hline T_{2} & x14 & x25 & x36 & 123 & 156 & 624\\
\hline T_{3} & x13 & x26 & x45 & 125 & 146
\end{array}
$
\end{center}
\item[1.3.] $a\notin \{5,6\}$ and $b=5$

A $3$-way $3$-homogeneous $(8,3,2)$ is achieved.
\begin{center}
$
\begin{array}{c|c c c c c c c c}
 T_{1} & x12 & x34 & x56 & 13a & 145 & 52a & 236 & 46a\\
\hline T_{2} & x14 & x25 & x36 & 123 & 15a & 546 & 26a & 34a\\
\hline T_{3} & x13 & x26 & x45 & 125 & 14a & 56a & 23a & 364
\end{array}
$
\end{center}
\item[1.4.] $a=6$ and $b\notin \{5,6\}$

Only one of the remaining blocks of $T_{2}$ can contain $6$, but we should have $62$, $64$, $65$ is $T_{2}$ which is impossible.

\begin{center}
$
\begin{array}{c|c c c c c}
 T_{1} & x12 & x34 & x56 & 136 & 14b\\
\hline T_{2} & x14 & x25 & x36 & 123 & 16b\\
\hline T_{3} & x13 & x26 & x45 & 12b & 146
\end{array}
$
\end{center}
\end{itemize}
\item[2.] $T_{2}$ and $T_{3}$ contain block $123$ and $124$, respectively:

The other blocks of $T_{2}$ and $T_{3}$ which contain $1$ should be as follows. It is a contradiction with definition of trade.
\begin{center}
$
\begin{array}{c|c c c c c }
 T_{1} & x12 & x34 & x56 & 13a & 14b\\
\hline T_{2} & x14 & x25 & x36 & 123 & 1ab\\
\hline T_{3} & x13 & x26 & x45 & 124 & 1ab
\end{array}
$
\end{center}
\item[3.] $T_{2}$ and $T_{3}$  do not contain the blocks $123$ and $124$:
\begin{center}
$
\begin{array}{c|c c c c c }
 T_{1} & x12 & x34 & x56 & 13a & 14b\\
\hline T_{2} & x14 & x25 & x36 & 13b & 12a\\
\hline T_{3} & x13 & x26 & x45 & 14a & 12b
\end{array}
$
\end{center}
There are four possible cases for elements $a$ and $b$:
\begin{itemize}
\item[3.1.] $a,b\notin \{5,6\}$

A $3$-way $3$-homogeneous $(9,3,2)$ is achieved.
\begin{center}
$
\begin{array}{c|c c c c c c c c c }
 T_{1} & x12 & x34 & x56 & 13a & 14b &36b & 45a & 62a & 52b\\
\hline T_{2} & x14 & x25 & x36 & 13b & 12a & 34a & 45b & 65a & 62b\\
\hline T_{3} & x13 & x26 & x45 & 14a & 12b & 34b & 3a6 & 65b & 52a
\end{array}
$
\end{center}
\item[3.2.] $a=6$ and $b=5$

The other blocks of $T_{1}$, $T_{2}$ and $T_{3}$ which contain $6$ should be as follows. So the other blocks of $T_{2}$ and $T_{3}$ which contain $4$ should be $423$. It is a contradiction with definition of trade.
\begin{center}
$
\begin{array}{c|c c c c c c c }
 T_{1} & x12 & x34 & x56 & 136 & 145 & 624\\
\hline T_{2} & x14 & x25 & x36 & 135 & 126 & 645 & 423\\
\hline T_{3} & x13 & x26 & x45 & 146 & 125 & 635 & 423
\end{array}
$
\end{center}
\item[3.3.] $a\notin \{5,6\}$ and $b=5$

The other blocks which contain $5$ should be as follows. So the other blocks of $T$ which contain $2$ should be as follows. $36$ appears two times in $T_{2}$. It is a contradiction with definition of Steiner trade.
\begin{center}
$
\begin{array}{c|c c c c c c c }
 T_{1} & x12 & x34 & x56 & 13a & 145 & 523 & 26a\\
\hline T_{2} & x14 & x25 & x36 & 135 & 12a & 546 & 236\\
\hline T_{3} & x13 & x26 & x45 & 14a & 125 & 536 & 23a
\end{array}
$
\end{center}
\item[3.4.] $a=6$ and $b\notin \{5,6\}$

The other blocks which contain $6$ should be as follows.  There is a block in $T_{2}$ which contain $45$. $4$ appears three times in $T_{1}$ but the pair $45$ does not appear in blocks of $T_{1}$. It is a contradiction with definition of trade.
\begin{center}
$
\begin{array}{c|c c c c c c }
 T_{1} & x12 & x34 & x56 & 136 & 14b & 624\\
\hline T_{2} & x14 & x25 & x36 & 13b & 126 & 645\\
\hline T_{3} & x13 & x26 & x45 & 146 & 12b & 635
\end{array}
$
\end{center}
\end{itemize}
\end{list}
For the other case, the same result can be obtained by a similar argument. So it can be deduced that if there exists a $3$-way $3$-homogeneous $(v,3,2)$ Steiner trade of volume $v~($for $v\geq 8$), then it contains a $3$-way $3$-homogeneous $(8,3,2)$ or $(9,3,2)$ Steiner trade of volume $8$ or $9$, respectively.
}\end{proof}
The following corollary is the direct result of Theorem {\ref{main,d=3}}.
\begin{cor}
If there exists a $3$-way $3$-homogeneous $(v,3,2)$ Steiner trade of volume $v$, then it can be represented as a union of disjoint $3$-way $3$-homogeneous $(8,3,2)$ or $(9,3,2)$ Steiner trades of volume $8$ or $9$, respectively.
\end{cor}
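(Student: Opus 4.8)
The plan is to fix a single element and use its local structure (the derived trade) as a seed from which a complete small subtrade is forced. First I fix $x\in\mathrm{found}(T)$. Because $T$ is $3$-homogeneous with $d=3$, the collection $T_x=\{T_{1x},T_{2x},T_{3x}\}$ of blocks through $x$ is a $3$-way $(v,3,1)$ trade of volume $3$, and deleting $x$ turns it into a $3$-way $(v-1,2,1)$ Steiner trade of volume $3$. By Lemma~\ref{1-factor} this derived trade is, up to relabelling, one of exactly two configurations, so I may assume $T_x$ is the first of them, say $T_{1x}=\{x12,x34,x56\}$, $T_{2x}=\{x14,x25,x36\}$, $T_{3x}=\{x13,x26,x45\}$; the second configuration is handled by the identical argument at the end. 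Since $v\ge 8$ by Proposition~\ref{[1,7]}, such a seed genuinely sits inside a larger trade.

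The engine of the proof is a single bookkeeping principle. For any element $y$, the pair $\{1,y\}$ lies in at most one block of each $T_i$ (Steiner) and in the same number of blocks of each $T_i$ (trade), so it lies in exactly one block of every $T_i$ or in none of them. Since $1$ lies in exactly three blocks of each $T_i$ and each block contributes two distinct partners, element $1$ has exactly six partners, and these six partners are common to all three collections. Reading off the three blocks of $T_x$ through $1$ shows that $2,3,4$ (and $x$) are partners, and this forces which pairs each $T_i$ must still supply: the two remaining blocks of $T_1$ through $1$ must be $\{1,3,a\}$ and $\{1,4,b\}$ (they cannot coincide in $\{1,3,4\}$, since $x34\in T_1$ already carries the pair $\{3,4\}$), while $T_2$ must still realise the pairs $\{1,2\}$ and $\{1,3\}$ and $T_3$ the pairs $\{1,2\}$ and $\{1,4\}$.

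From here I split into three cases according to whether $T_2$ and/or $T_3$ realises its two outstanding pairs inside one block ($123$ in $T_2$, $124$ in $T_3$) or in two separate blocks, and within each case I sub-split on whether the new partners $a,b$ fall in $\{5,6\}$ or are genuinely new elements. In every branch the same mechanism runs: each newly placed pair must, by the trade condition, reappear in the other two collections, forcing further blocks; these forced blocks either reintroduce a pair already present in some $T_i$ (a Steiner violation) or make some pair occur with unequal multiplicities (a trade violation), killing the branch, or else they close up on a foundation of only $8$ or $9$ points into a self-contained set of $8$ or $9$ blocks per collection. Verifying that the two surviving branches really are $3$-way $3$-homogeneous $(8,3,2)$ and $(9,3,2)$ Steiner trades—and in particular that they are \emph{closed}, i.e. no element appearing in them is forced into a further block—finishes the argument, since what is produced is by construction a subtrade of $T$.

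The main obstacle is controlling the case analysis so that it is genuinely exhaustive and so that in the good branches the forced configuration closes off exactly at volume $8$ or $9$ rather than leaking into additional blocks. The subtlety is that every placement is constrained simultaneously by homogeneity (degree exactly $3$), by the Steiner condition (no repeated pair), and by the trade condition (equal pair-multiplicities), so one must track all three constraints at once along each chain of forced pairs; the delicate bookkeeping is precisely deciding, in each branch, whether the chain of forced appearances terminates in a closed small trade or in an inconsistency.
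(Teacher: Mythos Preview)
What you have written is a sketch of the proof of the preceding Theorem~\ref{main,d=3} rather than of the corollary itself. The paper treats the corollary as an immediate consequence of that theorem and gives no separate argument: once you know that any $3$-way $3$-homogeneous $(v,3,2)$ Steiner trade $T$ contains a $3$-way $3$-homogeneous subtrade $T'$ of volume $8$ or $9$, the key observation is that $T'$ already uses each element of $\mathrm{found}(T')$ exactly three times, so deleting the blocks of $T'$ from $T$ leaves a $3$-way $3$-homogeneous Steiner trade on $\mathrm{found}(T)\setminus\mathrm{found}(T')$, and induction on $v$ finishes. You never state this removal-and-iterate step, so even granting your case analysis the corollary is not actually concluded.

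As for the case analysis itself, your plan matches the paper's proof of Theorem~\ref{main,d=3} essentially verbatim: fix $x$, invoke Lemma~\ref{1-factor} to pin down $T_x$, then chase the remaining blocks through element~$1$, splitting on whether $T_2$ or $T_3$ packs its outstanding pairs $\{1,2\},\{1,3\}$ (resp.\ $\{1,2\},\{1,4\}$) into a single block, and sub-splitting on whether the new partners $a,b$ lie in $\{5,6\}$. The paper carries out every branch explicitly and exhibits the two surviving configurations; your proposal only describes the mechanism in words, so the claim that exactly two branches survive and close off at volume $8$ and $9$ is asserted rather than verified.
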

Define $[a,b]=\{c\in Z|~a\leq c\leq b\}$.
\begin{thm}{\label{exceptions of d=3}}
The $3$-way $3$-homogeneous $(v,3,2)$ Steiner trade of volume $v$ does not exist for $v\in [1,7]\cup [10,15]\cup [19,23]\cup [28,31]\cup [37,39]\cup \{46,47,55\}$.
\end{thm}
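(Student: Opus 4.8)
The plan is to reduce this non-existence statement to a purely numerical (Frobenius-type) question: for which positive integers $v$ can one write $v=8a+9b$ with $a,b$ nonnegative integers? The bridge is the Corollary immediately following Theorem \ref{main,d=3}, which asserts that any $3$-way $3$-homogeneous $(v,3,2)$ Steiner trade of volume $v$ decomposes as a union of disjoint $3$-way $3$-homogeneous $(8,3,2)$ and $(9,3,2)$ Steiner trades of volumes $8$ and $9$. Granting that corollary, the theorem should reduce to identifying the gap set of the numerical semigroup generated by $8$ and $9$.

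First I would make precise why the pieces of that decomposition live on pairwise disjoint foundations, so that the foundation size adds. If $S$ is the sub-trade of foundation size $8$ (resp.\ $9$) guaranteed by Theorem \ref{main,d=3}, then every element of $\text{found}(S)$ already occurs in exactly three blocks of $S$; since $T$ is $3$-homogeneous, such an element occurs in \emph{no} block of $T$ outside $S$. Hence the complementary blocks $T\setminus S$ again form a $3$-way $3$-homogeneous Steiner trade, of volume $v-8$ (resp.\ $v-9$), on the foundation $\text{found}(T)\setminus\text{found}(S)$ of the corresponding smaller size. Iterating, $\text{found}(T)$ is partitioned into parts of size $8$ and $9$, so $v=8a+9b$, where $a$ and $b$ count the $(8,3,2)$- and $(9,3,2)$-pieces respectively.

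The remaining step is arithmetic. Since $\gcd(8,9)=1$, the semigroup $\langle 8,9\rangle$ has Frobenius number $8\cdot 9-8-9=55$, so every $v\ge 56$ is representable and $55$ is not. For $v<56$ I would list the representable positives block by block—namely $8,9$; then $16,17,18$; then $24,\dots,27$; then $32,\dots,36$; then $40,\dots,45$; then $48,\dots,54$—and read off the complementary gaps as $[1,7]\cup[10,15]\cup[19,23]\cup[28,31]\cup[37,39]\cup\{46,47,55\}$. Because the corollary forces $v=8a+9b$ whenever a trade exists, no such trade exists for any $v$ in this gap set, which is exactly the claim.

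There is no serious obstacle here: the only genuine content is the disjointness argument of the second paragraph, after which the theorem is literally the statement that $[1,7]\cup[10,15]\cup[19,23]\cup[28,31]\cup[37,39]\cup\{46,47,55\}$ is the gap set of $\langle 8,9\rangle$. The one thing to verify carefully is that this list is complete and contains no representable value—for instance that $46$ and $47$ truly have no representation while $48=6\cdot 8$ and $54=6\cdot 9$ do—which is a routine finite check.
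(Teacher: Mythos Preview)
Your proof is correct and follows essentially the same idea as the paper: both reduce non-existence to the impossibility of writing $v$ as $8a+9b$, the paper by iteratively peeling off a subtrade of size $8$ or $9$ (via Theorem~\ref{main,d=3}) and checking each range inductively, you by invoking the Corollary once and identifying the gap set of $\langle 8,9\rangle$ directly. Your Frobenius packaging is a bit cleaner, and your explicit justification that the pieces live on disjoint foundations is a nice point the paper leaves implicit, but the mathematical content is the same.
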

\begin{proof}
{By Proposition \ref{[1,7]}, the $3$-way $3$-homogeneous $(v,3,2)$ Steiner trade of volume $v$ does not exist for $v\in [1,7]$. Let there exist a $3$-way $3$-homogeneous $(v,3,2)$ Steiner trade $T$ of volume $v$ for $v\in [10,15]$. By Theorem \ref{main,d=3}, it should contain a $3$-way $3$-homogeneous $(8,3,2)$ or $(9,3,2)$ Steiner trade $T'$ of volume $8$ or $9$, respectively. Therefore, $T\setminus T'$ is a $3$-way $3$-homogeneous $(u,3,2)$ Steiner trade of volume $u$, where $u\in [1,7]$, which is impossible. By same argument, let there exist a $3$-way $3$-homogeneous $(v,3,2)$ Steiner trade $T$ of volume $v$ for $v\in [19,23]$. By Theorem \ref{main,d=3}, it should contain a $3$-way $3$-homogeneous $(8,3,2)$ or $(9,3,2)$ Steiner trade $T'$ of volume $8$ or $9$, respectively. So $T\setminus T'$ is a $3$-way $3$-homogeneous $(u,3,2)$ Steiner trade of volume $u$, where $u\in [10,15]$, which is a contradiction. The same way, we can prove non-existence of the other mentioned trades.
}
\end{proof}
\begin{thm}{\label{d=3}}
For every $v\geq 8$, there exists a $3$-way $3$-homogeneous $(v,3,2)$ Steiner trade of volume $v$, except for $v\in [10,15]\cup[19,23]\cup[28,31]\cup[37,39]\cup\{46,47,55\}$.
\end{thm}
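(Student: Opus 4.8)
The plan is to assemble the theorem from the two halves that the preceding results already supply. The nonexistence half—namely that no such trade exists for $v$ in the exceptional set $[10,15]\cup[19,23]\cup[28,31]\cup[37,39]\cup\{46,47,55\}$—is precisely the content of Theorem \ref{exceptions of d=3}, so nothing new is required there. For the existence half I would lean entirely on Theorem \ref{9l+8l}: whenever $v$ admits a representation $v=9\ell+8\ell'$ with $\ell,\ell'\in\{0,1,2,\dots\}$ not both zero, a $3$-way $3$-homogeneous $(v,3,2)$ Steiner trade of volume $v$ exists. The entire existence problem therefore collapses to a number-theoretic one: for which $v\geq 8$ does $v$ lie in the numerical semigroup $\langle 8,9\rangle$ generated by $8$ and $9$?

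Since $\gcd(8,9)=1$, the Chicken McNugget (Sylvester--Frobenius) theorem applies, and the largest integer not representable as a nonnegative combination of $8$ and $9$ is $8\cdot 9-8-9=55$. Consequently every $v\geq 56$ is representable, and the trade exists for all such $v$ by Theorem \ref{9l+8l}. What remains is the finite range $8\leq v\leq 55$, which I would dispatch by direct inspection. A compact way to organize the check is the observation that $v$ is representable iff $9(v\bmod 8)\leq v$ (taking $b\equiv v\pmod 8$ as the residue, since $9\equiv 1\pmod 8$), which reduces the verification to reading off a single inequality for each residue class.

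Carrying out this finite check, the representable values in $[8,55]$ are exactly $8,9$; $16,17,18$; $24,25,26,27$; $32,\dots,36$; $40,\dots,45$; and $48,\dots,54$, so that the non-representable ones are precisely $[10,15]\cup[19,23]\cup[28,31]\cup[37,39]\cup\{46,47,55\}$. For each representable $v$ one exhibits the explicit pair $(\ell,\ell')$ (for instance $16=8+8$, $17=8+9$, $24=8\cdot 3$, $25=8\cdot 2+9$, $27=9\cdot 3$) and invokes Theorem \ref{9l+8l}. This list coincides exactly with the exceptional set of Theorem \ref{exceptions of d=3}, so the two halves dovetail perfectly: the values we cannot reach through $8$'s and $9$'s are precisely the values already shown not to exist.

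The only point that deserves care—and the natural place where an error could hide—is confirming that the gap set of $\langle 8,9\rangle$ matches the stated exceptional set \emph{exactly}, with no stray omissions or inclusions at the boundaries of each interval. Once the Frobenius bound $55$ is established, however, this is a short and self-checking finite computation, so I anticipate no genuine obstacle; the theorem is in essence a clean corollary of Theorem \ref{9l+8l} and Theorem \ref{exceptions of d=3} taken together.
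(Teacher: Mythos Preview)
Your proposal is correct and follows essentially the same approach as the paper: both reduce the existence half to membership in the numerical semigroup $\langle 8,9\rangle$ via Theorem~\ref{9l+8l} and invoke Theorem~\ref{exceptions of d=3} for nonexistence. The only cosmetic difference is that the paper organizes the representability check by residue classes modulo~$9$ (writing $v=9k+r=9(k-c)+8c'$ explicitly for each $r$), whereas you invoke the Sylvester--Frobenius bound $8\cdot 9-8-9=55$ and then sweep $[8,55]$ by residues modulo~$8$; both arrive at the same gap set.
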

\begin{proof}
{
According to Theorem {\ref{9l+8l}}, it is enough to represent every $v\geq 8$ in the form $9\ell+8\ell'$, where $\ell, \ell'\geq 0$ as follows:
\\$v=9k$, where $k\geq1$ 
\\$v=9k+1=9(k-7)+64$, where $k-7\geq0$
\\$v=9k+2=9(k-6)+56$, where $k-6\geq0$
\\$v=9k+3=9(k-5)+48$, where $k-5\geq0$
\\$v=9k+4=9(k-4)+40$, where $k-4\geq0$
\\$v=9k+5=9(k-3)+32$, where $k-3\geq0$
\\$v=9k+6=9(k-2)+24$, where $k-2\geq0$
\\$v=9k+7=9(k-1)+16$, where $k-1\geq0$
\\$v=9k+8$, where $k\geq0$
\\Using Theorem \ref{exceptions of d=3} completes the proof.
}
\end{proof}
\section{$3$-way $d$-homogeneous $(v,3,2)$ Steiner trade for $d\in \{4,5,6\}$}
In this section we answer Question \ref{ques1} when $\mu =3$ and $d=4,5$ and $6$.
\subsection{$d=4$}
In this subsection we completely answer Question \ref{ques1} for $d=4$. Note that since $d=4$ by Remark \ref{dv/3}, $v$ should be a multiple of $3$.
\begin{prop}{\label{9,18,21,33}}
There exist $3$-way $4$-homogeneous $(9,3,2)$, $(18,3,2)$, $(21,3,2)$ and $(33,3,2)$ Steiner trades of volume $12$, $24$, $28$ and $44$, respectively.
\end{prop}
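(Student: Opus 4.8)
The plan is to produce all four trades from results already in hand rather than by inventing block lists from scratch. First note that these are precisely the multiples of $3$ with $d=4$ that the Latin-trade construction misses: for $v=9$ we have $v=3m$ with $m=3<d=4$, so item (a) of Theorem~\ref{(3m,3,2)} does not apply, while $v=18,21,33$ correspond to the orders $m=6,7,11$ that are excluded in that item. In every case the prescribed volume equals $4v/3$, as forced by Remark~\ref{dv/3}, so the statement is at least arithmetically consistent.

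For $v=9$ I would use a structural identification. In any such trade each of the three collections consists of $12$ triples on $9$ points; these contribute $36$ pairs, which are distinct by the Steiner condition, and since $\binom{9}{2}=36$ every pair occurs exactly once. Hence each collection is an $STS(9)$, whose replication number $(9-1)/2=4$ matches $4$-homogeneity. Consequently a $3$-way $4$-homogeneous $(9,3,2)$ Steiner trade of volume $12$ is nothing but a triple of pairwise block-disjoint copies of $STS(9)$. This is exactly the instance $v=9$ of Theorem~\ref{3STS} (indeed $9\equiv 3\pmod 6$ and $9\neq 7$), so $(9,3,2)$ needs no fresh construction.

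The remaining three I would assemble with Lemma~\ref{v1+v2}, which adds foundations and volumes while preserving both $4$-homogeneity and the Steiner property. Writing $18=9+9$ yields $(18,3,2)$ of volume $12+12=24$ from two element-disjoint copies of the $(9,3,2)$ trade. Writing $21=9+12$ yields $(21,3,2)$ of volume $12+16=28$, where the $(12,3,2)$ trade of volume $16$ exists by item (a) of Theorem~\ref{(3m,3,2)} since $12=3\cdot4$ with $m=4\notin\{6,7,11\}$ and $4\le d\le m$. Writing $33=9+24$ yields $(33,3,2)$ of volume $12+32=44$, the $(24,3,2)$ trade of volume $32$ coming from the same theorem with $m=8$.

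Since the argument is short, the only genuine points to verify are the two reductions: that the $v=9$ trade coincides with three disjoint $STS(9)$'s (the pigeonhole count above), and that the complementary orders demanded by Lemma~\ref{v1+v2}, namely $m=4$ and $m=8$, avoid the exceptional set $\{6,7,11\}$. Both are routine, and Theorem~\ref{3STS} removes the one place a new construction might have been required. If instead one prefers self-contained explicit witnesses, in the spirit of the $(8,3,2)$ array in Lemma~\ref{(8l,3,2)}, the main obstacle would shift to writing down and checking the $44$-block arrays for $v=33$; there the real work is the pair-balance verification, which for a Steiner trade reduces to confirming that $T_1$, $T_2$ and $T_3$ cover an identical set of pairs.
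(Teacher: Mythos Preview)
Your proof is correct and follows essentially the same route as the paper: obtain the $(9,3,2)$ trade from Theorem~\ref{3STS}, then assemble the remaining three via Lemma~\ref{v1+v2} from $(9,3,2)$ together with instances supplied by Theorem~\ref{(3m,3,2)}. The only cosmetic difference is your decomposition $33=9+24$ (using $m=8$) in place of the paper's $33=15+18$ (using $m=5$ and two copies of $m=3$); both are valid, and your extra observation that any $4$-homogeneous $(9,3,2)$ Steiner trade is necessarily a triple of disjoint $STS(9)$'s is a pleasant bonus not present in the paper.
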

\begin{proof}
{According to Theorem \ref{3STS}, there exists a $3$-way $4$-homogeneous $(9,3,2)$ Steiner trade of volume $12$.
By Theorem {\ref{(3m,3,2)}}, there exist $3$-way $4$-homogeneous $(12,3,2)$ and $(15,3,2)$ Steiner trades of volume $16$ and $20$, respectively. Since $18=9+9$, $21=12+9$ and $33=15+18$, regarding to Lemma {\ref{v1+v2}}, the result follows.
}
\end{proof}
\begin{thm}{\label{d=4}}
There exists a $3$-way $4$-homogeneous $(v,3,2)$ Steiner trade if and only if $v \geq 9$ and $v\equiv 0~($mod $3)$. 
\end{thm}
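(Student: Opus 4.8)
The plan is to prove the equivalence in two directions: the necessity is immediate from the general obstructions recorded earlier, while the sufficiency reduces to a short case analysis on $m = v/3$, each case being covered by an ingredient already in hand.

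For the necessity direction, suppose a $3$-way $4$-homogeneous $(v,3,2)$ Steiner trade exists. I would first invoke Remark \ref{dv/3}: the volume of any such trade equals $dv/3$, so with $d=4$ this forces $3 \mid 4v$, i.e. $v \equiv 0 \pmod 3$. Next, Remark \ref{v>=2d+1} gives $v \geq 2d+1 = 9$. Together these yield exactly the stated necessary conditions $v \geq 9$ and $v \equiv 0 \pmod 3$.

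For the sufficiency direction, write $v = 3m$ with $m \geq 3$, which is precisely the range cut out by $v \geq 9$ and $3 \mid v$. The strategy is to split into three families. When $m \geq 4$ and $m \notin \{6,7,11\}$, Theorem \ref{(3m,3,2)}\textsf{(a)} directly supplies a $3$-way $4$-homogeneous $(3m,3,2)$ Steiner trade. The value $m = 3$, i.e. $v = 9$, lies below the threshold $m \geq d = 4$ required by Theorem \ref{(3m,3,2)}, so here I would instead pull the desired $(9,3,2)$ trade from Proposition \ref{9,18,21,33}. Finally, the three residual values $m \in \{6,7,11\}$, i.e. $v \in \{18,21,33\}$, are exactly the cases left open by Theorem \ref{(3m,3,2)}\textsf{(a)}, and these are again delivered by Proposition \ref{9,18,21,33}. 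Since every $m \geq 3$ falls into exactly one of these three families, the construction covers all admissible $v$.

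The real content of the argument lies not in this assembly but in the ingredients it calls upon: the Latin-trade machinery behind Theorem \ref{(3m,3,2)} and the explicit small constructions underlying Proposition \ref{9,18,21,33}. Consequently, the only point demanding genuine care is the bookkeeping check that the exceptional set $\{6,7,11\}$ of Theorem \ref{(3m,3,2)}\textsf{(a)}, together with the sub-threshold case $v=9$, is exactly the set of values that Proposition \ref{9,18,21,33} was designed to resolve, so that no admissible $v$ is missed and the ``possibly'' exceptions inherited from the Latin-trade theorem are genuinely settled here. Once that matching is confirmed, both directions close and the equivalence follows.
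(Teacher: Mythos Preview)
Your proposal is correct and follows essentially the same route as the paper: the paper's proof is the one-line statement that the result follows from Theorem~\ref{(3m,3,2)}, Proposition~\ref{9,18,21,33}, and Remarks~\ref{dv/3} and~\ref{v>=2d+1}, and you have simply unpacked this into the explicit case analysis (necessity from the two remarks; sufficiency via Theorem~\ref{(3m,3,2)}\textsf{(a)} for $m\ge 4$, $m\notin\{6,7,11\}$, with Proposition~\ref{9,18,21,33} handling $m\in\{3,6,7,11\}$).
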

\begin{proof}
{The result is followed by Theorem {\ref{(3m,3,2)}}, Proposition {\ref{9,18,21,33}} and Remarks {\ref{dv/3}} and {\ref{v>=2d+1}}.
}
\end{proof}
\subsection{$d=5$}
In this subsection we solve Question \ref{ques1} for $d=5$, except for $v=18$. By Remark \ref{dv/3}, $v$ should be a multiple of $3$.
\begin{prop}{\label{d=5,v=12}}
There exists a $3$-way $5$-homogeneous $(12,3,2)$ Steiner trade of volume $20$.
\end{prop}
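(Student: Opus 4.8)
The plan is to recast the statement as a packing problem, then realize the trade through a single block list organized by a cyclic automorphism.

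First I would analyze what the three defining conditions force on each collection. Fix $i$ and look at $T_i$: it is Steiner and $5$-homogeneous, so each point $x$ lies in exactly $5$ blocks of $T_i$, and, since $T_i$ is Steiner, the $10$ points paired with $x$ across these blocks are distinct. Hence $x$ is paired with $10$ of the other $11$ points, missing exactly one, and the set of non-covered pairs of $T_i$ (its \emph{leave}) is a perfect matching $F_i$ of $K_{12}$. The trade condition requires every pair to be covered the same number of times---here $0$ or $1$---in each $T_i$, so the leaves coincide: $F_1=F_2=F_3=:F$. Regarding the six edges of $F$ as \emph{groups}, each $T_i$ is then a decomposition of $K_{12}\setminus F$ into $20$ triangles meeting each group in at most one point (a group-divisible design of type $2^6$). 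Thus the assertion is equivalent to the existence of three pairwise block-disjoint such decompositions sharing the one common $F$.

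To produce them I would put the points on $\mathbb{Z}_{12}$ and take $F=\{\{i,i+6\}:0\le i\le 5\}$, so that every translation $x\mapsto x+c$ permutes the six groups and is therefore an automorphism of $K_{12}\setminus F$. Let $\sigma\colon x\mapsto x+4$, an automorphism of order $3$. If $G$ is one triangle decomposition of $K_{12}\setminus F$, then $\sigma G$ and $\sigma^{2}G$ are again decompositions, and I would set $T_1=G$, $T_2=\sigma G$, $T_3=\sigma^{2}G$. Because $\sigma$ preserves $K_{12}\setminus F$, all three collections are automatically Steiner, $5$-homogeneous, and cover each of the $60$ cross-group pairs exactly once, so the trade and homogeneity conditions hold for free; the only thing left to secure is pairwise disjointness. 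Applying $\sigma$ shows that $G\cap\sigma G=\emptyset$ already forces $\sigma G\cap\sigma^{2}G=\emptyset$ and $G\cap\sigma^{2}G=\emptyset$, so it suffices to choose $G$ with $G\cap\sigma G=\emptyset$.

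The heart of the argument is therefore the explicit choice of the base decomposition $G$. The condition $G\cap\sigma G=\emptyset$ says that $G$ must meet each $\sigma$-orbit of cross-group triangles in at most one triangle; in particular $G$ must avoid the four $\sigma$-fixed triangles $\{0,4,8\}$, $\{1,5,9\}$, $\{2,6,10\}$, $\{3,7,11\}$ and may never contain both a triangle and its $\sigma$-image. I expect this bookkeeping to be the main obstacle: one must hand-build a single $20$-triangle decomposition of $K_{12}\setminus F$ that simultaneously tiles every cross-group pair exactly once and lies in $20$ distinct $\sigma$-orbits. Once such a $G$ is written down, verification of the four defining properties is routine. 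If the cyclic bookkeeping proves awkward, the same trade can instead be displayed as three explicit $20$-block arrays and checked directly; the existence of three mutually disjoint designs of type $2^6$ (a partial large set of size $3$, comfortably inside the known large sets) guarantees that such arrays exist. Finally, the parameters are admissible: $d=5$ forces $3\mid v$ by Remark \ref{dv/3}, and $v=12\ge 2d+1=11$ by Remark \ref{v>=2d+1}.
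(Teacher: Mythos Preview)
Your proposal is correct and lands on exactly the same reduction as the paper: each $T_i$ must be a triangle decomposition of $K_{12}$ minus a common perfect matching, so the task is to exhibit three pairwise disjoint such decompositions (equivalently, three disjoint compatible $(2,3)$-packings on $12$ points), which the paper simply cites from \cite{packing}. Your extra cyclic-shift idea with $\sigma\colon x\mapsto x+4$ is a nice potential shortcut, but since you do not actually produce the base decomposition $G$, your argument ultimately rests on the same known large-set existence result the paper invokes.
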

\begin{proof}
{It is enough to use three disjoint decomposition of $K_{12}-I$ (the graph obtained from $K_{12}$ by removing the edges of a perfect matching) into triples. In other words, in the literature of block designs, it is enough to consider three disjoint compatible $(2,3)$-packings on $12$ points (see \cite{packing}).
}
\end{proof}
\begin{thm}{\label{d=5}}
Except possibly for $v=18$, there exists a $3$-way $5$-homogeneous $(v,3,2)$ Steiner trade if and only if $v \geq 12$ and $v\equiv 0~($mod $3)$. 
\end{thm}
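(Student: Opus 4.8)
The plan is to handle the two directions of the biconditional separately, keeping the necessity part purely arithmetic and reducing the sufficiency part to three results already established in the paper, so that the only genuinely new construction needed is the one supplied by Proposition \ref{d=5,v=12}.

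For \emph{necessity} I would invoke the two standing remarks directly. Since $d=5$ is not a multiple of $3$, Remark \ref{dv/3} forces $v\equiv 0\pmod 3$; and Remark \ref{v>=2d+1} forces $v\geq 2d+1=11$. Combining these gives $v\geq 12$ with $3\mid v$, which is exactly the stated numerical condition, so no trade exists outside this range.

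For \emph{sufficiency} I would partition the admissible foundations $v\in\{12,15,18,21,\dots\}$ according to whether they are reached by the Latin-trade machinery. Writing $v=3m$, Theorem \ref{(3m,3,2)}(b) supplies a $3$-way $5$-homogeneous $(3m,3,2)$ Steiner trade for every $m\geq 5$ except possibly $m=6$; translated back to $v$, this realizes every multiple of $3$ with $v\geq 15$ other than $v=18$. The single remaining small value $v=12$ is then covered directly by Proposition \ref{d=5,v=12}. Taken together these two ingredients produce a trade for every $v\geq 12$ with $3\mid v$, save possibly $v=18$, establishing the claim.

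The one obstructed value is $v=18$, and I expect isolating why it resists the available tools to be the conceptual crux rather than any computation. On the Latin-trade side it corresponds precisely to the missing $(3,5,6)$ Latin trade (the case $m=6$), the sole exception in Theorem \ref{(3m,3,2)}(b). The natural fallback is the gluing Lemma \ref{v1+v2}, which would require writing $18=v_{1}+v_{2}$ with both parts admissible foundations for $5$-homogeneous trades; but by Remark \ref{v>=2d+1} the smallest admissible foundation is $12$, and $18-12=6<11$ cannot support a second $5$-homogeneous trade. Hence no such decomposition exists and the gluing lemma fails to reach $v=18$. This is exactly what forces the ``except possibly'' clause, and it is the point where a separate direct or computer-aided construction would be needed to close the gap.
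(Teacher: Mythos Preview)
Your proposal is correct and follows exactly the paper's approach: the paper's proof is the single sentence ``The result follows by Theorem~\ref{(3m,3,2)}, Proposition~\ref{d=5,v=12} and Remarks~\ref{dv/3} and~\ref{v>=2d+1},'' and you have simply unpacked how those four ingredients combine. Your additional paragraph explaining why $v=18$ resists both the Latin-trade route and the gluing lemma is accurate commentary that the paper omits.
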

\begin{proof}
{The result follows by Theorem {\ref{(3m,3,2)}}, Proposition {\ref{d=5,v=12}} and Remarks {\ref{dv/3}} and {\ref{v>=2d+1}}.
}
\end{proof}
\subsection{$d=6$}
In this subsection we solve Question \ref{ques1} for $d=6$, only six values of $v$ are left undecided. Note that by Remark \ref{dv/3} there is no restriction for $v$ in this case.
\begin{prop}{\label{d=6-13,14}}
There exist a $3$-way $6$-homogeneous $(v,3,2)$ Steiner trade of volume $2v$, where $v=13,14,15$ and $16$.
\end{prop}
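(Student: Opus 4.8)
The plan is to turn this existence question into a purely graph-theoretic one. The volume is forced: a $3$-way $6$-homogeneous $(v,3,2)$ Steiner trade has volume $6v/3=2v$, matching the statement. Now let $T=\{T_1,T_2,T_3\}$ be such a trade. Since each $T_i$ is Steiner, no pair of points lies in two of its blocks, so the set of pairs that $T_i$ covers forms a simple graph $G_i$ that $T_i$ decomposes into triangles. The trade condition together with the Steiner condition forces every pair to be covered $0$ or $1$ times and the same number of times in each $T_i$; hence $G_1=G_2=G_3=:G^{\ast}$. Finally, $6$-homogeneity says each point lies in exactly $6$ blocks of $T_i$, and the Steiner property makes the $12$ corresponding neighbours distinct, so every vertex has degree $12$ in $G^{\ast}$. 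Thus $G^{\ast}$ is a $12$-regular graph on $v$ vertices, i.e. $K_v$ with a $(v-13)$-regular graph removed (consistent with Remark \ref{v>=2d+1}, which gives $v\geq 2\cdot 6+1=13$). Conversely, any three pairwise block-disjoint triangle decompositions of a $12$-regular graph on $v$ vertices assemble into exactly such a trade. So it suffices to exhibit three pairwise disjoint triangle decompositions of a suitable $G^{\ast}$ for each $v\in\{13,14,15,16\}$.

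For $v=13$ the removed graph is empty, so $G^{\ast}=K_{13}$ and the three decompositions are three pairwise disjoint $STS(13)$; these exist by the result of \cite{intersection} quoted in Theorem \ref{3STS}. Equivalently, since $13\equiv 1$ (mod $6$) and $13\neq 7$, Theorem \ref{3STS} directly yields a $3$-way $((13-1)/2)=6$-homogeneous $(13,3,2)$ Steiner trade of volume $13\cdot 12/6=26=2\cdot 13$, which is precisely the required object. Note this value cannot be reduced via Lemma \ref{v1+v2}, because by Remark \ref{v>=2d+1} every $6$-homogeneous trade already needs $v\geq 13$; the same remark shows that $v=14,15,16$ admit no nontrivial splitting either and must be constructed directly.

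For $v=14,15,16$ the removed graph is $1$-, $2$-, and $3$-regular respectively, so $G^{\ast}$ is $K_{14}$ minus a perfect matching, $K_{15}$ minus a $2$-factor, and $K_{16}$ minus a cubic graph. For each I would fix a convenient removed graph (a $1$-factor for $v=14$, a single Hamiltonian cycle or a union of short cycles for $v=15$, and a $3$-regular circulant for $v=16$) and then display three explicit collections $T_1,T_2,T_3$ of triples as block arrays, in the style of Lemmas \ref{(8l,3,2)} and \ref{1-factor}. A cyclic or group-divisible base-block construction is the natural route, with $T_2$ and $T_3$ obtained from $T_1$ by a symmetry (for instance a rotation of the point set) chosen so as to guarantee block-disjointness. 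Once the arrays are written down the verification is routine: one checks that each $T_i$ partitions the edges of $G^{\ast}$ into triangles (which simultaneously yields the Steiner property, $6$-homogeneity, and volume $2v$), that $T_1,T_2,T_3$ share no common block, and that all three cover the same pair graph $G^{\ast}$ (automatic, since each decomposes $G^{\ast}$).

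The main obstacle is the explicit construction for $v=15$ and especially $v=16$. As the degree of the removed graph grows there is less symmetry to exploit, and one must simultaneously secure both that all three collections decompose the \emph{same} $12$-regular graph and that they are pairwise block-disjoint --- a large-set-flavoured requirement. Achieving disjointness without destroying homogeneity is the delicate point, and for $v=16$, where $G^{\ast}$ is $K_{16}$ minus a cubic graph, it is most plausibly settled either by a carefully chosen cyclic difference construction or by a short computer search, rather than by a wholly hand-built argument.
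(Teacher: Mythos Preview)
Your graph-theoretic reformulation is correct, and your treatment of $v=13$ via Theorem~\ref{3STS} is exactly what the paper does. The gap is that for $v=14,15,16$ you only outline a plan (``display three explicit collections \ldots\ a cyclic or group-divisible base-block construction is the natural route \ldots\ or a short computer search'') and explicitly flag the construction as the main obstacle; you never actually produce the trades. Since the proposition is a bare existence claim for four specific small values, the proof \emph{is} the construction, and a sketch that defers the hard cases to an unspecified search is not yet a proof.

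What the paper does, and what you are missing, is to start from known highly structured designs rather than from an arbitrary $12$-regular $G^{\ast}$. For $v=15$ it takes a $KTS(15)$, removes one parallel class $B_1$ (leaving $30$ blocks, each point in $6$ of them), and finds a permutation $\pi$ of order $3$ on the point set such that $B_2,\pi(B_2),\pi^2(B_2)$ are pairwise disjoint; this is precisely your ``$T_2,T_3$ from $T_1$ by a symmetry'' idea, but with the crucial choice of a KTS as the underlying decomposition. For $v=14$ it again starts from a $KTS(15)$ and finds two involutions $\pi_1,\pi_2$ fixing a point $f$ so that $B,\pi_1(B),\pi_2(B)$ agree exactly on the seven triples through $f$; deleting those triples (and the point $f$) yields three disjoint $6$-homogeneous collections on $14$ points. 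For $v=16$ it uses the block set $D$ of a Kirkman frame of type $4^4$ (which is already $6$-homogeneous on $16$ points with $32$ blocks) and two permutations $\pi_3,\pi_4$ stabilising the group partition so that $D,\pi_3(D),\pi_4(D)$ are pairwise disjoint. In each case the leverage comes from the automorphism-rich ambient design, which makes the disjointness verification finite and small; your proposal to pick a generic removed graph and then search would work in principle but forfeits this structure.
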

\begin{proof}
{By Theorem \ref{3STS} there exist a $3$-way $6$-homogeneous $(13,3,2)$ Steiner trade of volume $26$.

Let $X=\{1,2,3,4,5,6,7,8,9,a,b,c,d,e,f\}$ and let $(X,B)$ be a $KTS(15)$, where
\\$B=\{123,48c,5ae,6bd,79f,145,28a,3df,69e,7bc,167,29b,3ce,4af,58d,189,2cf,356,4be,$
\\$ 7ad,1ab,2de,347,59c,68f,1cd,246,39a,5bf,78e,1ef,257,38b,49d,6ac\}$
\\We define the following permutations on $X$:\\
$\pi_{1}=(1~e)(2~4)(3~7)(5~6)(8~b)(a~c)(9~d)$\\
$\pi_{2}=(9~7)(3~d)(4~a)(2~c)(6~8)(5~b)(1~e)$
\\The intersection of three sets $B, \pi_{1} (B)$ and $\pi_{2} (B)$ is $C=\{79f,3df,4af,2cf,68f,5bf,1ef\}$.
\\So $T=\{T_{1}, T_{2}, T_{3}\}$ is a $3$-way $6$-homogeneous $(14,3,2)$ Steiner trade of volume $28$, where
$T_{1}=B\setminus C$, $T_{2}=\pi_{1} (B)\setminus C$ and $T_{3}=\pi_{3} (B)\setminus C$.

Let $X=\{1,2,3,4,5,6,7,8,9,a,b,c,d,e,f\}$ and let $(X,B_{1}\cup B_{2})$ be a $KTS(15)$, where $B_{1}=\{12f,345,678,9ab,cde\}$ and
$B_{2}=\{36f,15e,24a,7bc,89d,9cf,147,2be,38a,56d,7af,$
\\$16c,258,3bd,49e,4df,18b,269,37e,5ac,8ef,1ad,23c,46b,579,5bf,139,27d,48c,6ae\}$
\\We define permutations $\pi=(6~7~8)(9~b~a)(c~d~e)$ on $X$:
\\$B_{2}$, $\pi (B_{2})$ and $\pi (\pi (B_{2}))$ are disjoint. So $T=\{B_{2},\pi (B_{2}),\pi (\pi (B_{2}))\}$ is a 
$3$-way $6$-homogeneous $(15,3,2)$ Steiner trade of volume $30$.

Let $Y=\{1,2,3,4,5,6,7,8,9,a,b,c,d,e,f,g\}$ and
\\$D=\{59e,6cg,7af,8bd,5bf,6ad,7ce,89g,19f,2bg,3cd,4ae,1ag,2cf,3be,49d,15d,28e,36f,$
\\$47g,16e,27d,35g,48f,17b,269,38a,45c,18c,25a,379,46b\}$
\\(In fact $D$ is the block set of a Kirkman frame of type $4^{4}$ with group set 
\\$G=\{1234,5678,9abc,defg\}$ \cite{survey-Stinson}). We define the following permutations on $Y$:
\\$\pi_{3}=(1~2)(3~4)(5~6)(7~8)$
\\$\pi_{4}=(1~3)(2~4)(5~7)(6~8)(9~a)(b~c)$
\\$D$, $\pi_{3}(D)$ and $\pi_{4}(D)$ are disjoint. So $T=\{D,\pi_{3}(D),\pi_{4}(D)\}$ is a 
$3$-way $6$-homogeneous $(16,3,2)$ Steiner trade of volume $32$.
}\end{proof}
\begin{thm}{\label{d=6}}
There exists a $3$-way $6$-homogeneous $(v,3,2)$ Steiner trade of volume $2v$ for every $v\geq 13$, except possibly for $v\in \{17,19,20,22,23,25\}$.
\begin{proof}
{By Remark {\ref{v>=2d+1}}, $v\geq 13$.
We investigate three cases for $v$. For $v=3\ell$, where $\ell\geq 6$, the result clearly follows by Theorem {\ref{(3m,3,2)}}. For cases $v=3\ell +1=3(\ell-4)+13$ and $v=3\ell +2=3(\ell-4)+14$, where $\ell\geq 10$, we use Theorem {\ref{(3m,3,2)}} and Proposition {\ref{d=6-13,14}}.
\\For $v\in \{13,14,15,16,26,28,29\}$, by Proposition {\ref{d=6-13,14}} and Lemma {\ref{v1+v2}}, there exists a $3$-way $6$-homogeneous $(v,3,2)$ Steiner trade of volume $2v$.
}
\end{proof}
\end{thm}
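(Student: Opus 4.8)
The plan is to combine three ingredients: the lower bound forced by Remark \ref{v>=2d+1}, the asymptotic existence result for multiples of $3$ coming from Theorem \ref{(3m,3,2)}, and the four explicit small trades of Proposition \ref{d=6-13,14}, which serve as seeds. Since a $6$-homogeneous trade automatically has volume $6v/3 = 2v$ and Lemma \ref{v1+v2} adds both foundations and volumes, the ``volume $2v$'' bookkeeping takes care of itself, and I can focus purely on realizing each admissible foundation $v$.

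First I would note that Remark \ref{v>=2d+1} gives $v \geq 2\cdot 6 + 1 = 13$, which is why the statement starts at $v = 13$; and since $d = 6$ is divisible by $3$, Remark \ref{dv/3} imposes no congruence on $v$, so all three residue classes modulo $3$ must be handled. For $v = 3\ell$ with $\ell \geq 6$, part (c) of Theorem \ref{(3m,3,2)} (applicable because $6 = d \leq m = \ell$) supplies the trade directly, covering every multiple of $3$ at least $18$. For the remaining residues I would split off a seed: writing $v = 3\ell + 1 = 3(\ell-4) + 13$ and $v = 3\ell + 2 = 3(\ell-4) + 14$, and demanding $\ell - 4 \geq 6$ so that Theorem \ref{(3m,3,2)} applies to the $3(\ell-4)$ part, I glue that trade to a $(13,3,2)$ or $(14,3,2)$ seed by Lemma \ref{v1+v2}. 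This settles $v \equiv 1 \pmod 3$ for $v \geq 31$ and $v \equiv 2 \pmod 3$ for $v \geq 32$.

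What is left is a finite list of small foundations. The values $v \in \{13,14,15,16\}$ are exactly Proposition \ref{d=6-13,14}, and the further reachable values $26 = 13+13$, $28 = 14+14$ and $29 = 13+16$ follow by summing two seeds through Lemma \ref{v1+v2} (also giving $27 = 13+14$, $30 = 15+15$, $31 = 15+16$, and so on). A direct check then shows that exactly six foundations remain unreached, namely $17, 19, 20, 22, 23, 25$. These are precisely the non-multiples of $3$ lying strictly between the largest single seed ($16$) and the smallest sum of two seeds ($26$); they are too small to be written as a qualifying multiple of $3$ (the smallest being $18$) plus a seed (the smallest such sum being $18 + 13 = 31$), so the method leaves exactly these six values undecided.

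I expect the gluing and the modulo-$3$ casework to be entirely routine; the genuine difficulty sits upstream in Proposition \ref{d=6-13,14}, where the $(14,3,2)$, $(15,3,2)$ and $(16,3,2)$ seeds must be produced by hand. There one takes a Kirkman triple system (or a Kirkman frame of type $4^4$) and applies carefully chosen permutations whose images intersect the base system in a prescribed common block set, then deletes that common set; the challenge is to choose the permutations so that the three resulting collections are pairwise disjoint and each remains $6$-regular on its points while preserving the Steiner property. Verifying that these three explicit seeds behave as claimed is, in my view, the main obstacle.
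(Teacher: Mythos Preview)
Your proposal is correct and follows essentially the same approach as the paper: the lower bound from Remark~\ref{v>=2d+1}, multiples of $3$ via Theorem~\ref{(3m,3,2)}, the residues $1$ and $2$ modulo $3$ handled by splitting off a seed of size $13$ or $14$ once $\ell\geq 10$, and the finite leftover list $\{13,14,15,16,26,28,29\}$ covered by Proposition~\ref{d=6-13,14} together with Lemma~\ref{v1+v2}. Your additional remarks on the construction of the seeds in Proposition~\ref{d=6-13,14} are accurate but belong to that proposition rather than to the present proof.
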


Our results are summarized below:

\textbf{Main Theorem.}
All $3$-way $d$-homogeneous $(v,3,2)$ Steiner trades exist for
\begin{itemize}
\item[\textsc{(I)}] $v=3m$:
\begin{itemize}
\item[{\sf (a)}] $d=4$, $m\geq 3$, (by Theorem \ref{d=4})
\item[{\sf (b)}] $d=5$, $m\geq 4$ except possibly for $m=6$, (by Theorem \ref{d=5})
\item[{\sf (c)}] $7\leq d\leq 13$, $m\geq d$, (by Theorem \ref{(3m,3,2)})
\item[{\sf (d)}] $d=15$, $m\geq d$, (by Theorem \ref{(3m,3,2)})
\item[{\sf (e)}] $d\geq 4$ and $m\geq d^{2}$, (by Theorem \ref{(3m,3,2)})
\item[{\sf (f)}] $m\equiv 0~($mod $5)$ and $m\geq d$, except possibly for $m=30$, (by Theorem \ref{(3m,3,2)})
\item[{\sf (g)}] $m\equiv 0~($mod $7)$ and $m\geq d$, except possibly for $m=7$ (where $d=4$) and $m=42$. (by Theorem \ref{(3m,3,2)})
\end{itemize}
\item[\textsc{(II)}] $v=6m+1$:
\begin{itemize}
\item[] $d=3m$, $m\geq 2$. (by Theorem \ref{3STS})
\end{itemize}
\item[\textsc{(III)}] $v=6m+3$:
\begin{itemize}
\item[] $d=3m+1$, $m\geq 1$. (by Theorem \ref{3STS})
\end{itemize} 
\item[\textsc{(IV)}] All $v$:
\begin{itemize}
\item[\sf (a)] $d=3$, $v\geq 8$ except for $v\in [10,15]\cup[19,23]\cup[28,31]\cup[37,39]\cup\{46,47,55\}$, (by Theorem \ref{d=3})
\item[\sf (b)] $d=6$, $v\geq 13$ except possibly for $v\in \{17,19,20,22,23,25\}$. (by Theorem \ref{d=6})
\end{itemize}
\end{itemize}

\end{document}